\newcommand{\commentcolor}{blue}
\newcommand{\ttilde}{\raisebox{0.5ex}{\texttildelow}}
\def\arxivversion{1}
\newenvironment{sizeddisplay}[2]
 {\par\nopagebreak#1\noindent#2}
 {\nopagebreak\ignorespacesafterend}
\newcommand{\pushright}[1]{\ifmeasuring@#1\else\omit\hfill$\displaystyle#1$\fi\ignorespaces}
\newcommand{\edit}[1]{\textcolor{blue}{#1}}
\newcommand{\edit}[1]{#1}
\newtheorem{theorem}{Theorem}
\newcommand{\Ssub}[1]{S_{\hspace{-0.15em}#1}} % manual kerning of the uppercase S symbol's subscript
\begin{document}
%
% paper title
% Titles are generally capitalized except for words such as a, an, and, as,
% at, but, by, for, in, nor, of, on, or, the, to and up, which are usually
% not capitalized unless they are the first or last word of the title.
% Linebreaks \\ can be used within to get better formatting as desired.
% Do not put math or special symbols in the title.
% \title{Invertibility Conditions for Balanced\\ Power System Admittance Matrices}
\title{Discrete Shortest Paths in Optimal \\Power Flow Feasible Regions}
%
%
% author names and IEEE memberships
% note positions of commas and nonbreaking spaces ( ~ ) LaTeX will not break
% a structure at a ~ so this keeps an author's name from being broken across
% two lines.
% use \thanks{} to gain access to the first footnote area
% a separate \thanks must be used for each paragraph as LaTeX2e's \thanks
% was not built to handle multiple paragraphs
%

\author{Daniel Turizo,~\IEEEmembership{Member,~IEEE}, Diego Cifuentes, Anton Leykin, and Daniel K. Molzahn,~\IEEEmembership{Senior Member,~IEEE}% <-this % stops a space
\thanks{Daniel Turizo and Daniel K. Molzahn are with the School of Electrical and Computer Engineering, Georgia Institute of Technology, \{djturizo,molzahn\}@gatech.edu. Support from NSF contract~\#2023140.}%
\thanks{Diego Cifuentes is with the H. Milton Stewart School of Industrial and Systems Engineering, Georgia Institute of Technology, dfc3@gatech.edu.}%
\thanks{Anton Leykin is with the School of Mathematics, Georgia Institute of Technology, leykin@math.gatech.edu. Support from NSF DMS award~\#2001267.}% <-this % stops a space
}%

\maketitle

% As a general rule, do not put math, special symbols or citations
% in the abstract or keywords.
\begin{abstract}
Optimal power flow (OPF) is a critical optimization problem for power systems to operate at points where cost or other operational objectives are optimized. Due to the non-convexity of the set of feasible OPF operating points, it is non-trivial to transition the power system from its current operating point to the optimal one without violating constraints. On top of that, practical considerations dictate that the transition should be achieved using a small number of small-magnitude control actions. To solve this problem, this paper proposes an algorithm for computing a transition path by framing it as a shortest path problem. This problem is formulated in terms of a discretized piece-wise linear path, where the number of pieces is fixed a priori in order to limit the number of control actions. This formulation yields a nonlinear optimization problem (NLP) with a sparse block tridiagonal structure, which we leverage by utilizing a specialized interior point method. An initial feasible path for our method is generated by solving a sequence of relaxations which are then tightened in a homotopy-like procedure. Numerical experiments illustrate the effectiveness of the algorithm.
\end{abstract}

% Note that keywords are not normally used for peerreview papers.
\begin{IEEEkeywords}
Optimal power flow, shortest path, nonlinear optimization, interior point method
\end{IEEEkeywords}

% For peer review papers, you can put extra information on the cover
% page as needed:
% \ifCLASSOPTIONpeerreview
% \begin{center} \bfseries EDICS Category: 3-BBND \end{center}
% \fi
%
% For peerreview papers, this IEEEtran command inserts a page break and
% creates the second title. It will be ignored for other modes.
\IEEEpeerreviewmaketitle

\section{Introduction}
\IEEEPARstart{T}{he} optimal power flow (OPF) is arguably the most important problem in steady state power system operation. OPF is an optimization problem that seeks to minimize an objective (usually operation cost) subject to the power flow equations governing the power system behavior and the engineering and technical constraints associated with physical operation of the system and its components \cite{crow}. A complete formulation of the OPF problem, called Alternating Current OPF (ACOPF), is a nonconvex problem with nonlinear equality constraints and hundreds to thousands of variables. 
%Historically, the ACOPF had been considered too computationally expensive to be used in practice, which lead system operators to use simpler, linear approximation like the Direct Current OPF (DCOPF). However, in the last two decades the exponential growth of computational resources rendered ACOPF not prohibitive anymore and prompted multiple advances \cite{molzahn_opf_sdp,molzahn_opf_sdp_large,panciatici_opf,molzahn_opf_sparse_moments,lee_opf_convex_restriction}. Moreover, government programs have incentivized research aiming to make the ACOPF efficient enough to be used by operators~\cite{molzahn_arpa}.

%With the prospect of the ACOPF being used by industry in the near future come the issue of transitioning from the current system state to the target optimal state. 
% After solving an ACOPF problem, the \edit{entity in charge of operating the power system (the system operator, as it is called)} must determine how to transition the system from the current operating point to the resulting optimal point.
\edit{The optimal operating point from an ACOPF solution provides values for the variables associated with controllable resources. Short-term planners and real-time system operators must determine how to transition the system from the current operating point to the optimal point.}
% An OPF solution provides values that controllable variables must take in order to minimize the objective function. 
The control variables may be manipulated physically by, for example, controlling a floodgate in a hydro plant or the boiler in a thermal plant.
%\cite{kundur}. 
As such, the transition process between values of the controllable variables must be performed in terms of a sequence of few simple control actions, as the physical implementation limits the complexity of the execution. Furthermore, the transition between operating points should respect the system constraints in the same way that the optimal solution does. 

The problem of state transitioning in terms of few simple actions is not trivial, but some approaches have been explored in the literature. Some authors have used linear OPF approximations to tractably generate the transition as a sequence of corrective actions involving a small subset of the controllable variables. References \cite{capitanescu_limited_opf} and \cite{capitanescu_redispatch} construct a mixed-integer linear program (MILP) as an approximation to the ACOPF, while also adding hard constraints on the number of controllable variables modified. Reference \cite{sun_corrective_opf} applies sparse techniques based on high-dimensional statistics to the DCOPF formulation to generate sparse solutions with respect to a base state. These approaches, while tractable, rely on linear approximations to the original problem, so they do not guarantee that constraints are not violated during the transition. Moreover, these linear approximations improve tractability at the expense of ignoring the non-convex and possibly non-connected geometry of the feasible space \cite{lesieutre2005convexity,hill2008,bukhsh2013,hicss2014,lee_opf_convex_restriction,case9mod,molzahn_cases}. In light of these drawbacks, \cite{capitanescu_minlp} and \cite{avramidis_minlp} extend previous formulations to consider the full ACOPF, obtaining a mixed-integer nonlinear program (MINLP). These papers approximate the binary constraints in the MINLP using barrier functions, obtaining a continuous nonlinear program (NLP). This new approximation represents the original feasible set more accurately, yet still does not guarantee feasibility during the transition. 

The issue of guaranteeing feasibility during the transition process has been tackled by recent work in \cite{lee_feasible_path} and \cite{opf_sequence_path}. Reference \cite{lee_feasible_path} proposes a method for iteratively generating a sequence of convex restrictions (i.e., convex inner approximations) for the ACOPF feasible set. The sequence of sets are pairwise connected, and at some point the method generates a convex restriction containing the optimal operating point. The output of the method is a finite sequence sequence of operating points which define a piece-wise linear path connecting the current operating point and the optimal operating point. This path is guaranteed to be feasible, as it is contained in a chain of connected convex restrictions containing both operating points. Reference \cite{opf_sequence_path} proposes an algorithm for iteratively generating a sequence of feasible operating points using sensitivity information and a Newton iteration. The transition is constructed using each point in the sequence. The main drawback of approaches like those of \cite{lee_feasible_path} and \cite{opf_sequence_path} is that there is \edit{no} control over the number of intermediate operating points generated during the iteration process. That is, while these methods output a finite sequence of intermediate transition points, the length of the sequence can be arbitrarily large.

An important issue that, to the authors knowledge, has not been studied in the literature regards the \textit{amplitude}, that is, the size of of the change each variable undertakes during a control action (or equivalently, the distance between states before and \edit{after} the control action takes place). Even if the transition can be done using a few control actions involving a small number of variables, large amplitudes for these actions can be detrimental. For example, large amplitude control actions in battery energy storage systems can increase the depth-of-discharge, thus increasing battery degradation \cite{battery_degradation}. Ideally, the best transition path would be the straight line joining the current and optimal operating points since this path represents a single control action with the minimal possible amplitude. If the constraints are violated by the straight line, the transition path should be modified to avoid constraint violations, thus increasing the number and amplitude of control actions.  

This paper addresses two of the issues of operating point transitioning: the number and amplitude of control actions. \edit{We formulate the problem of minimizing the amplitude of control actions as a \textit{shortest path} problem that seeks to minimize the length of the path joining the current and optimal operating points inside the feasible space}. To this end, we propose an algorithm that computes a piece-wise linear approximation of this shortest path as a discretized path defined in terms of a chosen number of intermediate operating points. We formulate the shortest path problem as an NLP where the objective function is the path length and the optimization variables are the coordinates of the intermediate operating points, subject to the ACOPF constraints. The NLP is solved using a feasible interior point method coupled with an homotopy procedure to generate an initial feasible path. When the interior point method is applied to our formulation, the matrices involved show a sparse block tridiagonal structure. We show how to exploit this structure to reduce the interior point method's computation time, so that each iteration scales linearly with the number of intermediate operating points. We thus obtain a scalable algorithm that minimizes the amplitude of control actions and enables specifying the number of intermediate points. Numerical experiments on multiple test cases of varying sizes show the algorithm's effectiveness in finding a discretized shortest path for a specific number of points. 

\edit{
To summarize, the main contributions of our paper are:
\begin{itemize}
\item \emph{A formulation of the transitioning problem by casting it into a shortest path problem constrained to the ACOPF feasible region.} The shortest path formulation has the advantage of minimizing the amplitude of control actions. Moreover, this formulation discretizes the transition path into a finite, pre-specified number of linear pieces. Accordingly, the best transition path using exactly the desired number of control actions is obtained by solving our formulation.
\item \emph{An interior point method with sparse block tridiagonal structure for solving the shortest path problem.} The method includes a homotopy procedure for generating an initial feasible path, so no path data beyond the endpoints needs to be provided.
\item \emph{Experiments with multiple test cases of different scales}. These experiments show the method's effectiveness.
\end{itemize}
}

The rest of the paper is organized as follows. Section II describes the formulation of the ACOPF problem and the corresponding shortest path problem. Section III elaborates on the implementation of a feasible interior point method that leverages the special structure of the shortest path problem. Section IV provides a description of the complete algorithm, including a homotopy procedure for generating an initial feasible path required to execute the interior point algorithm. Section V illustrates the numerical experiments we performed. Section VI discusses conclusions and future work.

\section{Shortest Path OPF Problem Formulation}
We consider an arbitrary power system with two different operating points of interest. We wish to connect these points through a continuous path such that every point in the path is a feasible operating condition with respect to the OPF constraints. For a power system with $n$ buses, let $x \in \mathbb{R}^{2n}$ denote the real and imaginary parts of the voltage phasors for all buses, i.e., the state vector of the power system. Let $u \in \mathbb{R}^{2g}$ denote the vector of controlled variables\footnote{Usually the controlled variables of OPF problem are the voltage magnitude and active power outputs of each generator. Other type of controlled variables are valid, as long as they fit within the proposed framework.}, where $g \leq n$ is the number of generators. In particular, we denote the points we want to connect by $u_0$ and \edit{$u_{\infty} \neq u_0$}. The relationship between $x$ and $u$ is given by the power flow equations:
\begin{equation}
\label{eq:power_flow}
\begin{array}{l}
f(x,u) = [f_1(x,u), \cdots, f_{2n}(x,u)]^T = 0 \in \mathbb{R}^{2n},\\
f_k(x,u) = \begin{cases}
\frac{1}{2} x^T H_k x + r_k^T x + c_k - u_k, &  k \leq 2g, \\
\frac{1}{2} x^T H_k x + r_k^T x + c_k, & k > 2g
\end{cases}
\end{array}
\end{equation}
for appropriate symmetric matrices $H_k \in \mathbb{R}^{2n \times 2n}$ (which correspond to the $Y_k$ matrices in \cite{opf_polynomial_optimization}) and vectors $r_k \in \mathbb{R}^{2n}$. The matrices $H_k$ are highly structured: \edit{each can be written as a sum of a matrix with at most two non-zero rows and its transpose, and thus each $H_k$ has rank at most 4.}

The OPF feasible set consists of all pairs $(u,x)$ satisfying the power flow equations and the OPF constraints $g_i$ and $h_i$ (like voltage limits, line flow limits, etc.):
\begin{subequations}
\begin{align}
g_i (u) &\leq 0, \qquad i \in \mathcal{U}, \\
h_i (x) &\leq 0, \qquad i \in \mathcal{X},
\end{align}
\end{subequations}
for appropriate disjoint index sets $\mathcal{U}, \mathcal{X}$. We assume that all OPF constraints inequalities depend on either $u$ ($i \in \mathcal{U}$) or $x$ ($i \in \mathcal{X}$), but not both.\footnote{In the standard OPF problem the entries of $u$ are the generator voltage magnitudes and active power of PV buses. As such, $g_{\mathcal{U}}$ contains the voltage limits of generator buses and the active power limits of PV buses. On the other hand, $g_{\mathcal{X}}$ contains the voltage and active power limits of remaining buses, reactive power limits, line flow limits, and angle difference constraints.} The vector $x$ corresponds to the state vector associated with $u$ that satisfies \eqref{eq:power_flow}. The existence of such $x$ is not trivial, for some values $u$ there exists multiple solutions or possibly none \cite{tavora}. From the implicit function theorem \cite{iterative_solutions}, we can specify a branch of the mapping to define a continuous and injective function $\varphi$ from $u$ to $x$ in a neighborhood of $u$, as long as the Jacobian of \eqref{eq:power_flow} with respect to $x$ is non-singular in said neighborhood (see Fig. \ref{fig:opf_spaces}). We can use this information to restrict ourselves to a single branch of the mapping. Consider the pair $(u_0,x_0)$ where $u_0$ is the starting operating point and $x_0$ is the solution of \eqref{eq:power_flow} associated with $u_0$ for the branch we are interested in. Let $J(x) = \partial f(x,u)/{\partial x}$ denote the Jacobian of the power flow equations with respect to the state vector $x$ (the Jacobian with respect to $x$ is independent of $u$). If we assume that $J(x_0)$ is non-singular, then there exists a continuous and injective function $\varphi(u)$ defined by the branch of \eqref{eq:power_flow} satisfying $\varphi(u_0)=x_0$. We impose the additional constraint $u \in \mathcal{F}_0$ where $\mathcal{F}_0$ is defined as
\begin{subequations}
\begin{align}
\label{eq:feasible_set}
\mathcal{F}_0 &= \left\{{u \in \mathbb{R}^{2g} \, : \, J(\varphi(u)) \textrm{ is not singular}}\right\}, \\
\mathcal{F}_0 &= \left\{{u \in \mathbb{R}^{2g} \, : \, -| \det J(\varphi(u))| < 0}\right\}.
\end{align}
\end{subequations}
To use this formulation, we require some assumptions:
\begin{itemize}
  \item \textbf{Assumption~1:} The Jacobian $J(x_0)$ is non-singular.
  \item \textbf{Assumption~2:} The function $\varphi(u)$ can be computed.
  \item \mbox{\textbf{Assumption~3:} Both} $u_0$ and \edit{$u_{\infty}$} belong to the same connected component of $\mathcal{F}_0$.
\end{itemize}
In particular, $u_0$ and \edit{$u_{\infty}$} may be in different connected components if their associated states $x_0$ and \edit{$x_{\infty}$} belong to different branches of \eqref{eq:power_flow}. Under the previous assumptions, we can define the functions $g_i$ for all $i \in \mathcal{X}$ as
\begin{equation}
g_i(u) = h_i(\varphi(u)).
\end{equation}
so that all constraints depend only on $u$. The power flow feasible set constraint $u \in \mathcal{F}_0$ can be written analytically as
\begin{equation}
\label{eq:power_flow_constraint}
g_i(u) = -| \det J(\varphi(u))|, \quad i \in \mathcal{P},
\end{equation}
for some singleton index set $\mathcal{P}$ disjoint from $\mathcal{U},\mathcal{X}$. Define $\mathcal{I} = \mathcal{U} \cup \mathcal{X} \cup \mathcal{P}$. The interior of the power flow feasible set ($g_i, i \in \mathcal{P}$) and the OPF constraints' feasible set ($g_i, i \in \mathcal{U} \cup \mathcal{X}$) is given by all points $u \in \mathbb{R}^{2g}$ such that
\begin{equation}
\label{eq:feasible_set_implicit}
g_i(u) < 0, \quad i \in \mathcal{I}.
\end{equation}
For interior point methods, the distinction between $<$ and $\leq$ is inconsequential, as the numerical solution always lies in the interior of the feasible set.

\begin{figure}[tbp!]
    \centering
  \includegraphics[scale=0.53]{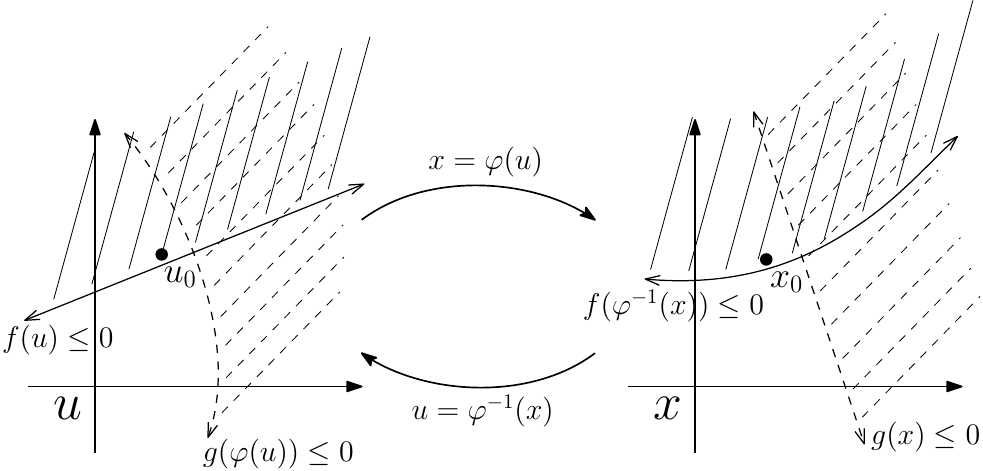}
  	\caption{Variables $u$ and $x$ in a neighborhood of $u_0$ and $x_0$ are related by the power flow mapping $\varphi$. Feasible sets generated by inequalities in $x$ can be mapped back to feasible sets in $u$ and vice-versa. As the power flow mapping $\varphi$ is nonlinear, the geometry of the mapped feasible sets will be altered.}
  	\label{fig:opf_spaces}
\end{figure}

\subsection{Optimal Control Problem}
Finding a path between two points in a set is a classical optimal control problem. If we seek the \textit{shortest} path, we then obtain an optimization problem. We define a continuation parameter $t \in [0,1]$ and the decision vector $u(t) \in C[0,1]$, where $C[0,1]$ denotes the set of continuous functions defined on the interval $[0,1]$. The shortest path problem is
\begin{equation}
\begin{array}{ll}
\inf_{u} & \int_0^1{\left({u'^T(t) u'(t)}\right)^{1/2} dt} \\
\mathrm{s.t.} & u(0) = u_0, \quad u(1) = \edit{u_{\infty}}, \\
 & g_i(u(t)) < 0, \quad \forall\; t \in [0,1], \quad i \in \mathcal{I}.
\end{array}
\label{eq:shortest_path_original}
\end{equation}
\edit{We note that the problem formulation models OPF constraints, but neglects the device dynamics involved during the transitioning process. This approximation is justified by the fact that typical power system dynamics are very fast relative to the state transitioning process. Also, this approximation is standard, as it is also adopted in previous work on related problems~\cite{capitanescu_limited_opf,capitanescu_redispatch,sun_corrective_opf,capitanescu_minlp,avramidis_minlp,lee_feasible_path,opf_sequence_path}.}

The problem described in \eqref{eq:shortest_path_original} is a calculus of variations optimization with constraints. The objective function may not be differentiable at some points (due to the square root). Moreover, problem~\eqref{eq:shortest_path_original} is naturally ill-defined, as even in the unconstrained case there are infinite gradient maps that yield a straight line between $u_0$ and \edit{$u_{\infty}$}. These issues can be avoided by requiring the gradient map to have constant norm (constant ``speed'' of transition along the path), which also simplifies the objective function. To illustrate this, assume that the path has constant norm, i.e. $\|u'(t)\| = \zeta > 0$ for all $t \in [0,1]$, then the objective function becomes
\begin{equation}
    \hspace*{-0.5em} \int_0^1{\left({u'^T (t) u'(t)}\right)^{1/2} dt} = \int_0^1{\|u'(t)\| dt} = \int_0^1{\zeta dt} = \zeta,
\end{equation}
so $\zeta$ not only denotes the ``speed'' of a particle traversing the path but also the ``time'' it takes for the particle to go from $u_0$ to \edit{$u_{\infty}$}. This formulation yields the following eikonal equation problem in terms of the arclength $\zeta$ (see \cite{viscosity_solutions,eikonal}):
\begin{equation}
\label{eq:shortest_path}
\begin{array}{ll}
\inf_{u,\zeta} & \zeta \\
\mathrm{s.t.} & u(0) = u_0, \quad u(1) = \edit{u_{\infty}}, \\
 & \|u'(t)\| = \zeta, \quad \forall\; t \in [0,1], \\
 & g_i(u(t)) < 0, \quad i \in \mathcal{I}.
\end{array}
\end{equation}
Any numerical approach to solving this problem must honor the feasible set constraints in \eqref{eq:feasible_set_implicit}, as there does not exist a state vector $x$ associated with any $u \notin \mathcal{F}_0$. Also, there is no trivial feasible starting path available in general. To circumvent this issue, we next propose a discretized version of the problem.

\subsection{Piece-wise Linear Path Approximation}
We restrict the search space from $C[0,1]$ to the space of piece-wise linear paths $PL[0,1]$.\footnote{Note that $PL[0,1]$ is dense in $C[0,1]$ with respect to the uniform norm, as the Schauder system of C[0,1] is composed of piece-wise linear functions~\cite{heil}.}
More specifically, we will consider the space of piece-wise linear paths with $K+1$ pieces, $PL_{K+1}[0,1]$. Let the characteristic (sometimes called indicator) function $\chi_E (t)$ be defined as
\begin{equation}
    \chi_E(t) = \left\{\begin{array}{cc}
        1 & t \in E, \\
        0 & t \notin E.
    \end{array}
    \right.
\end{equation}
We consider a piece-wise linear path $u(t) \in PL_{K+1}[0,1]$ defined by $K+2$ points $\{u_k\}_{k=0}^{K+1}$ and parameters $\{t_k\}_{k=0}^{K+1}$:
\begin{equation}
\label{eq:p_t}
\begin{array}{l}
u(t) = u_0 \chi_{\{0\}}(t) + \sum_{k=1}^{K+1}{c_k(t) \chi_{\left({t_{k-1}, t_k}\right]}(t)} ,\\
\text{with }c_k(t) = u_{k-1} + (u_k - u_{k-1}) \frac{t - t_{k-1}}{t_k - t_{k-1}}.
\end{array}
\end{equation}
The parameter values $t_k$ satisfy
\begin{equation}
\label{eq:path_spacing}
t_0 = 0 < t_1 < \cdots < t_K < t_{K+1} = 1.
\end{equation}
\edit{Additionally, we require that $u_{K+1} = u_{\infty}$, as the path endpoints must match the desired endpoints. For convenience, from now on we shall write $u_{K+1}$ and $u_{\infty}$ interchangeably.} We want to compute the path $p(t)$ that minimizes the objective function. Note that, for fixed values $\{t_k\}_{k=0}^{K+1}$, $u(t) \in PL_{K+1}[0,1]$ can be identified with $\{u_k\}_{k=0}^{K+1}$. Thus, the control problem reduces to computing the points $\{u_k\}_{k=1}^{K}$ that minimize the objective (recall that $u_0$ and $u_{K+1}$ are the path endpoints, and thus they are known):
\begin{equation}
\label{eq:piecewise_optim}
\begin{array}{ll}
\inf_{u_1,\cdots,u_K, \zeta} & \zeta \\
\mathrm{s.t.} & \|u'(t)\| = \zeta, \quad \forall\; t \in [0,1], \\
 & g_i(u(t)) < 0, \quad i \in \mathcal{I}.
\end{array}
\end{equation}
We concatenate the points $\{u_k\}_{k=1}^{K}$ into a single vector $\vec{u} = [u_1^T,\cdots,u_K^T]^T \in \mathbb{R}^{2gK}$. Replacing \eqref{eq:p_t} in \eqref{eq:piecewise_optim} yields
\begin{alignat}{2}
&\inf_{\vec{u}, \zeta}\quad && \zeta \nonumber \\
&\,\mathrm{s.t.} && c_k(\vec{u},t) = u_{k-1} + (u_k - u_{k-1}) \frac{t - t_{k-1}}{t_k - t_{k-1}}, \nonumber \\
& && \|c'_k(\vec{u},\tau)\| = \zeta, \quad \forall \tau \in \left({t_{k-1}, t_k}\right], \nonumber \\
 & && g_j(c_k(\vec{u},\tau)) < 0, \quad j \in \mathcal{I}, \nonumber \\
 & && \forall\; t \in [0,1], \quad k=1,\ldots,K+1 .
\end{alignat}
The optimization problem is now finite dimensional, yet the constraints are still infinite dimensional. For the purpose of tractability, we will relax the constraints by only enforcing them at the corner points $u_k$. This means that the path may violate constraints in between corner points. However, if needed, we can add more discretization points to mitigate this issue. As each piece of the path is linear, the infinite-dimensional constant speed constraint is equivalent to the finite dimensional constraint that enforces the slopes of each piece of the path to be equal in norm. Also note that the constant speed constraint implies that $\zeta \geq 0$, so minimizing $\zeta$ is equivalent to minimizing $\zeta^2$. These changes yield the following problem:
\begin{alignat}{2}
\label{eq:point_optim}
&\inf_{\vec{u}, \zeta}\quad && \zeta^2 \nonumber \\
&\,\mathrm{s.t.} && \frac{\|u_k - u_{k-1}\|}{t_k - t_{k-1}} = \zeta, \quad k=1,\ldots,K+1, \nonumber \\
 & && g_j(u_i) < 0, \quad j \in \mathcal{I}.
\end{alignat}
The norm constraints are nonlinear inequalities, and hence are non-convex. Any solution method for this problem should be able to at least converge to a local optimum, even in the presence of non-convexities. To this end, we will reformulate the problem in a way that is advantageous for the numerical method we will use. Define the constants
\begin{equation}
    w_k = \frac{1}{(t_k - t_{k-1})^2 \|u_{K+1} - u_0\|^2} > 0,
\end{equation}
for $k=1,\ldots,K+1$. Then \eqref{eq:point_optim} is equivalent to
\begin{alignat}{2}
&\inf_{\vec{u}} \quad && \frac{1}{K+1} \sum_{k=1}^{K+1}{w_k \|u_k - u_{k-1}\|^2} \nonumber \\
&\,\mathrm{s.t.} && w_{i+1} \|u_{i+1} - u_i\|^2 = w_i \|u_i - u_{i-1}\|^2, \quad i=1,\ldots,K, \nonumber \\
 & && g_j(u_i) < 0, \quad j \in \mathcal{I}.
\end{alignat}
In this formulation, the Hessian of the objective function is positive definite, which will prove useful for the interior point iteration described in the next section.

\section{Log-Barrier Newton Method Implementation}
The discretized shortest path problem in \eqref{eq:point_optim_numerical} has a  tridiagonal structure which is not leveraged by standard interior point solvers. For this reason, we developed \edit{a} specialized interior point implementation that makes use of the problem structure to reduce the computational complexity of solving the problem. This section is dedicated to explaining in detail the core iterative process behind this specialized solver.

\subsection{Interior Point Iteration}
\edit{
The shortest path problem has a parallel structure since the constraints do not depend on the full decision vector $\vec{u}$, but only on their associated point $u_i$. The only source of coupling between points comes from the objective and the equality constraints, which both have simple block-tridiagonal structures that can be exploited by a specialized interior point iteration. To reduce the computation time even further, we will use an extended formulation of the optimization problem including the system state $x_i$ associated with each control vector $u_i$ via the power flow equations. Specifically, we define
\begin{equation}\label{eq:p}
p = [p_1^T, \cdots, p_K^T]^T, \quad p_i = [u_i^T, x_i^T]^T, \quad i = 1 , \ldots, K.
\end{equation}
Now we write the optimization problem as
\begin{subequations}
\label{eq:point_optim_numerical}
\begin{alignat}{2}
&\inf_{p} \quad && \frac{1}{K+1} \sum_{k=1}^{K+1}{w_k \|u_k - u_{k-1}\|^2} \\
&\,\mathrm{s.t.} && w_{i+1} \|u_{i+1} - u_i\|^2 = w_i \|u_i - u_{i-1}\|^2, \quad i=1,\ldots,K, \label{eq:speed_constraint} \\
 & && f(u_i, x_i) = 0, \\
 & && g_j(p_i) < 0, \quad j \in \mathcal{I}.
\end{alignat}
\end{subequations}
This formulation is larger in terms of variables and has more equality constraints due to the power flow equations. However, the OPF constraints written in terms of $u_i$ and $x_i$ have extremely sparse formulations.
}

The log-barrier formulation that is central to the interior point method embeds the inequality constraints into the objective and then numerically solves the first-order Karush-Kuhn-Tucker (KKT) equations. We define the index set $\mathcal{E}=\{1,\ldots,K\}$ and the functions $c_i$ as
\begin{equation}
c_i(p) = w_i \|u_i - u_{i-1}\|^2 - w_{i+1} \|u_{i+1} - u_i\|^2, \quad i \in \mathcal{E}.
\end{equation}
We also define the objective as
\begin{equation}
\phi(p) = \frac{1}{K+1} \sum_{k=1}^{K+1}{w_k \|u_k - u_{k-1}\|^2}.
\end{equation}
Define $f(p_i)= f(u_i, x_i)$, so we can reformulate \eqref{eq:point_optim_numerical} using a barrier parameter $\mu>0$ as
\begin{alignat}{2}
&\inf_{p,s} \quad && \phi(p) - \mu \sum_{i=1}^{K}\sum_{j \in \mathcal{I}}{\ln[(s)_{|\mathcal{I}|(i-1)+j}]} \nonumber \\
&\,\mathrm{s.t.} && f(p_i) = 0, \quad i=1,\ldots,K, \nonumber \\
 & && c_j(p) = 0, \quad j \in \mathcal{E}, \nonumber \\
 & && g_j(p_i) + (s)_{|\mathcal{I}|(i-1)+j} = 0, \quad j \in \mathcal{I}, \label{eq:barrier_problem}
\end{alignat}
where $s$ is a vector of size $K|\mathcal{I}|$ and $(s)_k$ denotes the $k$-th entry of $s$. Note that this formulation is only equivalent to \eqref{eq:point_optim_numerical} when all the entries of $s$ are strictly positive. However, such constraints are unnecessary, as the logarithmic terms act as a barrier preventing the entries of $s$ from becoming non-positive. Define $g_{\mathcal{I}}(p_i)$ as the vector of inequality constraints (evaluated at a particular point on the path), $c_{\mathcal{E}}(p)$ as the vector of equality constraints, and let $D_p$ be the Jacobian operator (with respect to $p$). Let $v \in \mathbb{R}^{2nK}$, $y \in \mathbb{R}^{|\mathcal{E}|}$, and $z \in \mathbb{R}^{K|\mathcal{I}|}$ be vectors of Lagrange multipliers of the power flow, equality, and inequality constraints, respectively. Specifically, we write
\begin{equation}
v^T = [v^T_1, \ldots, v^T_K], \quad z^T = [z^T_1, \ldots, z^T_K],
\end{equation}
where $v_i \in \mathbb{R}^{2n}$ and $z_i \in \mathbb{R}^{|\mathcal{I}|}$ are the vectors of Largrange multipliers associated with power flow and inequality constraints evaluated at $p_i$, for $i = 1 , \ldots, K$. The stationarity condition, split for the derivatives with respect to $p$ and $s$, is
\begin{subequations}
\begin{align}
0 &= \nabla_p \phi(p) + \sum_{i=1}^{K}{[D_{p_i} f(p_i)]^T v_i} + [D_p c_{\mathcal{E}}(p)]^T y \nonumber \\
&\quad + \sum_{i=1}^{K}{[D_{p_i} g_{\mathcal{I}}(p_i)]^T z_i}, \\
0 &= -(\mu \vec{1}) \oslash s + z,
\end{align}
\end{subequations}
where $\oslash$ denotes element-wise division, $\vec{1}$ is a vector of ones, and $\nabla_p$($D_p$) denotes the gradient (Jacobian) with respect to $p$. More explicitly, the gradient term is
\begin{equation}
    \nabla_p \phi(p) = \left[\frac{\partial \phi(p)}{\partial p_i}\right]^K_{i=1},
\end{equation}
where the notation $[\cdot]^K_{i=1}$ indicates vertical concatenation of scalars/vectors/matrices indexed by $i$, along the ordered set ${1,\ldots,K}$. In the same fashion, we can write the Jacobian as
\begin{equation}
    D_p c_{\mathcal{E}}(p) = \left[ \nabla^T_p c_i(p)\right]_{i\in\mathcal{E}}.
\end{equation}
Define the vectors $d_k$ as
\begin{equation}
d_k = [(u_k - u_{k-1})^T, 0_{1 \times 2n}]^T, \quad k=1,\ldots, K+1,
\end{equation}
then we have that
\begin{equation}
\frac{\partial \phi(p)}{\partial p^T_i} = 2w_i d_i - 2w_{i+1} d_{i+1}, \quad i=1,\ldots, K.
\end{equation}
We also notice that
\begin{subequations}
\begin{align}
D_p f(p_i) &= [D_{p_1} f(p_i), \dots, D_{p_K} f(p_i)], \\
D_p f(p_i) &= [0_{2n \times 2(g+n)(i-1)}, D_{p_i} f(p_i), 0_{2n \times 2(g+n)(K-i)}],
\end{align}
\end{subequations}
and similarly
\begin{subequations}
\begin{align}
D_p g_{\mathcal{I}}(p_i) &= [D_{p_1} g_{\mathcal{I}}(p_i), \dots, D_{p_K} g_{\mathcal{I}}(p_i)], \\
D_p g_{\mathcal{I}}(p_i) &= [0_{|\mathcal{I}| \times 2(g+n)(i-1)}, D_{p_i} g_{\mathcal{I}}(p_i), 0_{|\mathcal{I}| \times 2(g+n)(K-i)}].
\end{align}
\end{subequations}
As a consequence, the stationarity condition becomes
\begin{subequations}
\begin{align}
0 &= \nabla_p \phi(p) + ([D_p f(p_i)]^K_{i=1})^T v + [D_p c_{\mathcal{E}}(p)]^T y \nonumber \\
&\quad + ([D_p g_{\mathcal{I}}(p_i)]^K_{i=1})^T z, \\
0 &= -(\mu \vec{1}) \oslash s + z.
\end{align}
\end{subequations}
The stationarity condition combined with the equality constraints define a set of nonlinear equations that can be solved numerically to find a KKT point. The Lagrangian of the problem, excluding the barrier terms, is
\begin{align}
L(p,s,v,y,z) &= \phi(p) + v^T[f(p_i)]^K_{i=1} + y^T c_{\mathcal{E}}(p) \nonumber \\
&\quad + z^T ([g_{\mathcal{I}}(p_i)]^K_{i=1} + s),
\end{align}
so the first-order KKT conditions can be written as
\begin{subequations}
\begin{align}
0 &= \nabla_p L(p,s,v,y,z) , \\
0 &= s \circ z - \mu \vec{1}, \label{eq:KKT_sz} \\
0 &= f(p_i), \quad i=1,\ldots,K, \\
0 &= c_{\mathcal{E}}(p), \\
0 &= g_{\mathcal{I}}(p_i) + (s)_{(|\mathcal{I}|(i-1)+1):|\mathcal{I}|i}, \quad i=1,\ldots,K,
\end{align}
\end{subequations}
where $\circ$ denotes element-wise multiplication. Notice that \eqref{eq:KKT_sz} implies that the entries of $s$ and $z$ must have the same sign, so $z$ must also have positive entries. For brevity, we will rename some vectors and matrices as
\begin{equation}
\label{eq:matrix_blocks}
\begin{array}{ll}
\Sigma = {\rm diag}(z \oslash s), & 
c_{\mathcal{I}}(p) = [g_{\mathcal{I}}(p_i)]^K_{i=1}, \\[0.35em]
D_{\mathcal{E}} = D_p c_{\mathcal{E}}, &
D_{\mathcal{I}} = [D_p g_{\mathcal{I}}(p_i)]^K_{i=1}, \\[0.35em]
f(p) = [f(p_i)]^K_{i=1} & D_f = [D_p f(p_i)]^K_{i=1}.
\end{array}
\end{equation}
Applying Newton's method, and omitting dependencies for brevity, we obtain the following update equation:
\begin{equation}
\label{eq:newton_step_full}
J \left[{\begin{matrix}
 \Delta p \\
 \Delta s \\
 \Delta v \\
 \Delta y \\
 \Delta z 
\end{matrix}}\right] = -\left[{\begin{matrix}
 \nabla_p L \\
 z - (\mu \vec{1}) \oslash s \\
 f(p) \\
 c_{\mathcal{E}} \\
 c_{\mathcal{I}}(p) + s
\end{matrix}}\right], \quad
J = \left[{\begin{matrix}
 \nabla^2_{pp} L & 0 & D_f^T & D^T_{\mathcal{E}} & D^T_{\mathcal{I}} \\
 0 & \Sigma & 0 & 0 & I \\
 D_f & 0 & 0 & 0 & 0 \\
 D_{\mathcal{E}} & 0 & 0 & 0 & 0 \\
 D_{\mathcal{I}} & I & 0 & 0 & 0
\end{matrix}}\right], 
\end{equation}
where the second row block has been left-multiplied by ${\rm diag}(s)^{-1}$ to make the matrix symmetric. The rank of the Newton matrix depends directly on $\nabla^2_{pp} L$, $D_f$, and $D_{\mathcal{E}}$. More specifically, if $\nabla^2_{pp} L$, $D_f$, and $D_{\mathcal{E}}$ are full rank, then the Newton matrix is invertible. Note that $D_f$ is a block diagonal matrix comprised of the power flow Jacobian at each $p_i$. This means that $D_f$ is full-rank, as the power flow feasibility constraint guarantees the invertibility of the square block of the power flow Jacobian associated with $x_i$. Hence, we only need to concern ourselves with studying the ranks of $\nabla^2_{pp} L$ and $D_{\mathcal{E}}$. We will first provide conditions under which $D_{\mathcal{E}}$ has full rank. Recall that
\begin{subequations}
\begin{align}
D_{\mathcal{E}} &= D_p c_{\mathcal{E}} = [D_{p_1} c_{\mathcal{E}}, \dots, D_{p_K} c_{\mathcal{E}}], \\
D_{\mathcal{E}} &= \left[\frac{\partial c_j}{\partial p^T_i}\right]_{j \in \mathcal{E}}, \quad i = 1,\ldots,K, \\
\frac{\partial c_j}{\partial p^T_i} &= 
\left\{ {\begin{array}{ll}
   -2 w_j d^T_j, & i=j-1 \\
   2 w_j d^T_j + 2 w_{j+1} d^T_{j+1}, & i=j \\
   -2 w_{j+1} d^T_{j+1}, & i=j+1 \\
   0, & {\rm else} \\
\end{array}} \right. ,
\end{align}
\end{subequations}
so $D_{\mathcal{E}}$ is a $K \times 2(g+n)K$ block tridiagonal matrix, with blocks of size $1 \times 2(g+n)$. Next, we prove the claim.
\begin{theorem}\label{thm:DE_rank}
For any $j=1,\ldots,K+1$ define $b_j(p) = w_j d_j$ and assume that $b_j (p)\neq 0$ for all $j=1,\ldots,K+1$ (this is true if and only if $d_j \neq 0$). Let $q_j$ be
\begin{equation}
q_j(p) = b_j / b^T_j b_j, \qquad \forall j = 1, \ldots, K+1 .
\end{equation}
If $\sum^{K+1}_{j=1} q_j(p) \neq 0$, then $D_{\mathcal{E}}$ is full rank.
\end{theorem}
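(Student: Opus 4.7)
The plan is to show that $D_{\mathcal{E}}$ has full row rank $K$ by showing its left null space is trivial. Suppose $\alpha = (\alpha_1, \ldots, \alpha_K)^T \in \mathbb{R}^K$ satisfies $\alpha^T D_{\mathcal{E}} = 0$. Using the block tridiagonal structure described just before the theorem, I would collect the contributions of the rows to each column block $i \in \{1, \ldots, K\}$. After dividing by $2$ and using the abbreviation $b_j = w_j d_j$, the equations become
\begin{align*}
& \alpha_1 b_1^T + (\alpha_1 - \alpha_2)\, b_2^T = 0 \quad (i=1), \\
& (\alpha_i - \alpha_{i-1})\, b_i^T + (\alpha_i - \alpha_{i+1})\, b_{i+1}^T = 0 \quad (2 \le i \le K-1), \\
& (\alpha_K - \alpha_{K-1})\, b_K^T + \alpha_K b_{K+1}^T = 0 \quad (i=K).
\end{align*}

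The key step is to recognize the telescoping pattern: introducing the auxiliary differences $\delta_j = \alpha_j - \alpha_{j-1}$ with the boundary conventions $\alpha_0 = 0$ and $\alpha_{K+1} = 0$, all three cases above collapse to the single chain of identities
\[
\delta_j\, b_j = \delta_{j+1}\, b_{j+1}, \qquad j = 1, \ldots, K.
\]
Hence all products $\delta_j b_j$ coincide with some common vector $v \in \mathbb{R}^{2g}$.

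I would then split into two cases. If $v = 0$, then since each $b_j \neq 0$ by hypothesis, every $\delta_j = 0$, which with $\alpha_0 = 0$ forces $\alpha = 0$ by induction, as desired. If $v \neq 0$, then each $b_j$ is a nonzero scalar multiple of $v$, specifically $b_j = v/\delta_j$, so that $q_j = b_j/(b_j^T b_j) = \delta_j v / (v^T v)$. Summing and using that $\sum_{j=1}^{K+1} \delta_j = \alpha_{K+1} - \alpha_0 = 0$ telescopes to zero yields $\sum_{j=1}^{K+1} q_j(p) = 0$, contradicting the hypothesis of the theorem. Therefore only the first case is possible and $\alpha = 0$, establishing that $D_{\mathcal{E}}$ has full row rank.

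The main obstacle is identifying the right change of variables, namely the telescoping differences $\delta_j$ with vanishing boundary conditions; once this substitution is made, the apparently asymmetric boundary rows (at $i=1$ and $i=K$) merge seamlessly with the interior rows, and the hypothesis $\sum_j q_j(p) \neq 0$ is seen to be precisely what rules out the nontrivial-$v$ case. No calculation beyond this manipulation is needed.
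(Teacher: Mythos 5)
Your proof is correct, and it takes a genuinely different and more elementary route than the paper's. You attack the left null space directly: the relation $\alpha^T D_{\mathcal{E}} = 0$, rewritten through the telescoping differences $\delta_j = \alpha_j - \alpha_{j-1}$ with the boundary conventions $\alpha_0 = \alpha_{K+1} = 0$, collapses to the chain $\delta_j b_j = \delta_{j+1} b_{j+1}$, which forces either $\alpha = 0$ or all $b_j$ collinear with a common $v \neq 0$, in which case $\sum_j q_j = \bigl(\sum_j \delta_j\bigr) v / (v^T v) = 0$ by telescoping --- contradicting the hypothesis. The paper instead runs a determinant argument: for each $k$ it multiplies $D_{\mathcal{E}}$ on the right by an explicit $2gK \times K$ matrix assembled from the $q_j$ (the matrices $M_k D_k$ built from lower/upper triangular blocks $L_k$, $U_k$), applies elementary row and column operations to reach a matrix $A_k$ with $\det A_k = b_k^T \sum_j q_j$, and then argues that rank deficiency of $D_{\mathcal{E}}$ forces every $\det A_k = 0$, which is packaged as $Q^T Q \vec{1} = 0$ and hence $\sum_j q_j = 0$. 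Both arguments hinge on the same contradiction, but yours is shorter, avoids the bookkeeping of the auxiliary matrices, and makes the failure mode transparent: $D_{\mathcal{E}}$ can only lose rank when all the weighted segment vectors $b_j$ are parallel and the reciprocal scalars sum to zero. The one point worth stating explicitly when writing it up is that ``full rank'' for the $K \times 2gK$ matrix $D_{\mathcal{E}}$ means full row rank, so showing the left null space is trivial is indeed exactly what is required.
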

\begin{proof}[Proof]
See Appendix C{\ifx\arxivversion\undefined{ of the extended version of this paper in~\cite{shortest_path_arxiv}}\fi}.
\end{proof}

From this result, we can guarantee that $D_{\mathcal{E}}$ is full rank as long as we prevent any $d_j$ from becoming $0$. We also need to safeguard the algorithm against cases where $\sum^{K+1}_{j=1} q_k = 0$. This condition is a vector generalization of the condition of impedance loops not adding to zero in order to guarantee the invertibility of the admittance matrix in transmission systems (see \cite{turizo}). We propose a simple step rejection procedure as a safeguard; this is detailed in Appendix B{\ifx\arxivversion\undefined{ of the extended version of this paper in~\cite{shortest_path_arxiv}}\fi}.

The Lagrangian Hessian, $\nabla^2_{pp} L$, may not be invertible, but it is a very structured matrix. In Appendix A{\ifx\arxivversion\undefined{ of the extended version of this paper in~\cite{shortest_path_arxiv}}\fi}, we show that $\nabla^2_{pp} L$, $\nabla^2_{pp} \phi$, and $\nabla^2_{pp} (y^T c_\mathcal{E})$ are symmetric and block tridiagonal, $\nabla^2_{pp} (L - \phi - y^T c_\mathcal{E})$ is block diagonal (with block sizes $2(g+n) \times 2(g+n)$ for all matrices), and $\nabla^2_{pp} L \succeq 0$. Invertibility and other issues (like indefiniteness) can be easily corrected by leveraging the block structure of $\nabla^2_{pp} L$ and its components, as shown in the next subsection.

\subsection{Newton Step Correction}
\edit{
To ensure that the Newton step in the primal variables, $\Delta p$, yields a descent direction, we require $\nabla^2_{pp} L$ to be positive definite in the tangent space of the linearized constraints. The simplest way to satisfy the condition is to modify $\nabla^2_{pp} L$ to make it positive definite. To this end, notice that
\begin{equation}
\nabla^2_{pp} L = \nabla^2_{pp} \phi + \nabla^2_{pp}(L - \phi).
\end{equation}
We already know that $\nabla^2_{pp} \phi \succeq 0$, so any source of indefiniteness must come from the Lagrangian terms of the constraints, $L - \phi$. We modify the Hessian by adding to it a diagonal matrix $S$ such that $\nabla^2_{pp}(L - \phi) + S \succ 0$. A strategy for selecting $S$ with low computational cost is discussed in Appendix B1{\ifx\arxivversion\undefined{ of the extended version of this paper in~\cite{shortest_path_arxiv}}\fi}. The Newton step with Hessian correction becomes
\begin{sizeddisplay}{\small}{}
\begin{equation}
\label{eq:newton_step_corrected}
J' \left[{\begin{matrix}
 \Delta p \\
 \Delta s \\
 \Delta v \\
 \Delta y \\
 \Delta z 
\end{matrix}}\right] = -\left[{\begin{matrix}
 \nabla_p L \\
 z - (\mu \vec{1}) \oslash s \\
 f(p) \\
 c_{\mathcal{E}} \\
 c_{\mathcal{I}}(p) + s
\end{matrix}}\right], \quad
J' = \left[{\begin{matrix}
 \nabla^2_{pp} L + S & 0 & D_f^T & D^T_{\mathcal{E}} & D^T_{\mathcal{I}} \\
 0 & \Sigma & 0 & 0 & I \\
 D_f & 0 & 0 & 0 & 0 \\
 D_{\mathcal{E}} & 0 & 0 & 0 & 0 \\
 D_{\mathcal{I}} & I & 0 & 0 & 0
\end{matrix}}\right], 
\end{equation}
\end{sizeddisplay}
where $S=0$ if it is determined that no correction is needed. Otherwise, $S$ is chosen as described in Appendix B1{\ifx\arxivversion\undefined{ of the extended version of this paper in~\cite{shortest_path_arxiv}}\fi}. A procedure for determining whether the Hessian needs correction or not is discussed in a later subsection.
}

\subsection{Newton Step Permutation}
\edit{
The Newton step computation requires solving the linear system \eqref{eq:newton_step_full}, which has size $K(2g + 4n + 2|\mathcal{I}| + 1)$, so a matrix factorization requires $O(K^3(n+|\mathcal{I}|)^3)$ operations if the matrix is dense. Fortunately, the Newton matrix $J$ is sparse, so the linear system can be solved much more quickly using a sparse linear solver. The performance a sparse solver depends on the amount of extra entries filled during the factorization step, which in turn depends on the specific input matrix. In particular, matrices with low bandwidth\footnote{\edit{The bandwidth of a symmetric $n \times n$ matrix $A$ is the smallest integer $b \geq 0$, if it exists, such that $(A)_{ij} = 0$ for all $i=1,\ldots,n$ and $j=i+b+1,\ldots,n$. If $b$ does not exist, then the bandwidth is $n-1$.}}usually generate very little fill-in during factorization. As a consequence, some sparse solvers employ techniques like the reverse Cuthill-McKee (RCM) algorithm to generate a permuted matrix with reduced bandwidth \cite{rcm_distributed}. However, the minimum bandwidth permutation of some sparse matrices may still be very large. Even if there exists a low bandwidth permutation for the matrix, techniques like RCM are heuristic in nature, so they are not guaranteed to achieve a significant bandwidth reduction\footnote{\edit{The bandwidth minimization problem for symmetric $n \times n$ matrices is known to be NP-hard (see problem [GT40] in Appendix A1 of \cite{np_hardness}). It is also NP-hard, for any $\epsilon > 0$, to approximate the minimum bandwidth to a factor of $3/2 - \epsilon$ (see \cite{bandwidth_approx_incractability}).}}. 
}

\edit{
We will show by construction that there exists a permutation of the Newton matrix that makes it block tridiagonal, with square blocks of size $2g + 4n + 2|\mathcal{I}| + 1$ and bandwidth at most $4g + 4n + 2|\mathcal{I}| + 1$ (in particular, this means that the cost of solving \eqref{eq:newton_step_full} scales linearly with $K$). To this end, we recall that $D_{\mathcal{I}}$, $D_f$, and $\nabla_{pp}^2(L - \phi - y^T c_{\mathcal{E}})$ are block diagonal; on the other hand $D_{\mathcal{E}}$, $\nabla_{pp}^2\phi$, and $\nabla_{pp}^2y^T c_{\mathcal{E}}$ are block tridiagonal. Let $I_i$ denote the $i \times i$ identity matrix for any $i \in \mathbb{N}$. We define the permutation matrix $P$ as
\begin{subequations}
\begin{align}
P &= [P_i]_{i=1}^K, \quad P_i = [P_{ij}]_{j=1}^5, \\
P_{i1} &= \big[
  0_{1 \times K(2g + 4n + |\mathcal{I}|)}, 0_{1 \times (i-1)}, 1, 0_{1 \times (K - i)}, 0_{1 \times K |\mathcal{I}|}\big], \\
P_{i2} &= \big[
  0_{2(g+n) \times (i-1)2(g+n)}, I_{2(g+n)}, 0_{2(g+n) \times (K - i)2(g+n)}, \nonumber \\
  &\quad\quad 0_{2(g+n) \times K(2|\mathcal{I}| + 2n + 1)}\big], \\
P_{i3} &= \big[
  0_{|\mathcal{I}| \times 2K(g+n)}, 0_{|\mathcal{I}| \times (i-1)|\mathcal{I}|}, I_{|\mathcal{I}|}, 0_{|\mathcal{I}| \times (K-i)|\mathcal{I}|}, \nonumber \\
  &\quad\quad 0_{|\mathcal{I}| \times K(2n + |\mathcal{I}| + 1)}\big], \\
P_{i4} &= \big[
  0_{2n \times K(2g+2n+|\mathcal{I}|)}, 0_{2n \times (i-1)2n}, I_{2n}, 0_{2n \times (K-i)2n}, \nonumber \\
  &\quad\quad 0_{2n \times K(|\mathcal{I}| + 1)}\big], \\
P_{i5} &= \big[
  0_{|\mathcal{I}| \times K(2g+4n+|\mathcal{I}|+1)}, 0_{|\mathcal{I}| \times (i-1)|\mathcal{I}|}, I_{|\mathcal{I}|}, 0_{|\mathcal{I}| \times (K-i)|\mathcal{I}|}\big].
\end{align}
\end{subequations}
Next we define $\Sigma_i = {\rm diag}(z_i \oslash s_i)$ for $i=1\ldots,K$ and we split the correction matrix $S$ into blocks as
\begin{equation}
S = \left[\begin{matrix}
\Ssub{1} & & \\
 & \ddots & \\
 & & \Ssub{K}
\end{matrix}\right], \quad \Ssub{i} \in \mathbb{R}^{2(g+n)}, \;i=1,\ldots,K. \label{eq:block_perturbation}
\end{equation}
Hence, by direct computation, we obtain that
\begin{equation}
P J' P^T = \begin{bmatrix}
\Phi_{1,1} & -\Phi_{2,1}^T &  &   \\
-\Phi_{2,1} & \ddots & \ddots &   \\
 &  \ddots & \ddots & -\Phi_{2,K-1}^T \\
 &   & -\Phi_{2,K-1} & \Phi_{1,K}
\end{bmatrix} ,
\end{equation}
where $\Phi_{1,i}$ and $\Phi_{2,j}$ are square matrices of size $2g + 4n + 2|\mathcal{I}| + 1$, defined as
\begin{subequations}
\begin{align}
\Phi_{1,i} &= \begin{bmatrix}
 0 & D_{p_i}c_i(p) & 0 & 0 & 0  \\
 D_{p_i}^T c_i(p) & \nabla^2_{p_i p_i} L + \Ssub{i} & 0 & D_{p_i}^T f(p_i) & D_{p_i}^T g_{\mathcal{I}}(p_i) \\
 0 & 0 & \Sigma_i & 0 & I \\
 0 & D_{p_i}f(p_i) & 0 & 0 & 0 \\
 0 & D_{p_i} g_{\mathcal{I}}(p_i) & I & 0 & 0
\end{bmatrix}, \\
\Phi_{2,j} &= \begin{bmatrix}
 0 & D_{p_j}c_{j+1}(p) & 0 & 0 & 0  \\
 D_{p_j}^T c_{j+1}(p) & \nabla^2_{p_{j+1} p_j}(\phi + y^T c_{\mathcal{E}}) & 0 & 0 & 0 \\
 0 & 0 & 0 & 0 & 0 \\
 0 & 0 & 0 & 0 & 0 \\
 0 & 0 & 0 & 0 & 0
\end{bmatrix},
\end{align}
\end{subequations}
for $i=1\ldots,K$ and $j=1\ldots,K-1$. Recall that neither $c_{\mathcal{E}}$ nor $\phi$ depend on any $x_i$, hence
\begin{subequations}
\begin{align}
D_{p_j}c_{j+1}(p) &= \begin{bmatrix}
 D_{u_j}c_{j+1}(p) & 0
\end{bmatrix}, \\
\nabla^2_{p_{j+1} p_j}(\phi + y^T c_{\mathcal{E}}) &= \begin{bmatrix}
 \nabla^2_{u_{j+1} u_j}(\phi + y^T c_{\mathcal{E}}) & 0 \\
 0 & 0
\end{bmatrix}.
\end{align}
\end{subequations}
Therefore we can write $\Phi_{2,j}$ as
\begin{equation}
\Phi_{2,j} = \begin{bmatrix}
 0 & D_{u_j}c_{j+1}(p) & 0 \\
 D_{u_j}^T c_{j+1}(p) & \nabla^2_{u_{j+1} u_j}(\phi + y^T c_{\mathcal{E}}) & 0 \\
 0 & 0 & 0 \\
\end{bmatrix}, 
\end{equation}
for $j=1\ldots,K-1$. From this representation it is clear that the bandwidth of $P J' P^T$ is at most $4g + 4n + 2|\mathcal{I}| + 1$. Finally, we use the fact that $P^{-1}=P^T$ for any permutation matrix to compute the Newton step by solving
\begin{equation}\label{eq:newton_step_permuted}
(PJ'P^T) \xi = -P\left[{\begin{matrix}
 \nabla_p L \\
 z - (\mu \vec{1}) \oslash s \\
 f(p) \\
 c_{\mathcal{E}} \\
 c_{\mathcal{I}}(p) + s
\end{matrix}}\right], \quad
 \xi = P\left[{\begin{matrix}
 \Delta p \\
 \Delta s \\
 \Delta v \\
 \Delta y \\
 \Delta z 
\end{matrix}}\right].
\end{equation}
Notice that $P$ is constant across iterations, so it only needs to be computed once.
}

\edit{
The Jacobian structure illustrated by this permutation is very amenable to parallelism. On one hand each $\Phi_{2,j}$ has at most $6g$ non-zero entries, so they can be computed in $O(g)$ time. On the other hand, each of $\Phi_{1,i}$, $\nabla_{p_k}L$, and $c_{\mathcal{I}}(p_k)$ can be computed in parallel, so the cost of the Newton step scales linearly with the number points divided by the number of parallel workers.
}

\subsection{Newton Iteration Algorithm}
Thus far, we have detailed a procedure for computing the Newton step in an interior point iteration for solving~\eqref{eq:point_optim_numerical}. However, a robust implementation must also incorporate safeguards for issues related to strong non-linearity, indefiniteness, strict positivity of dual variables, and scale disparity between primal and dual variables. We discuss these issues and their solutions, including a procedure for determining if the Hessian needs correction, in Appendix B{\ifx\arxivversion\undefined{ of the extended version of this paper in~\cite{shortest_path_arxiv}}\fi}. Once a complete Newton iteration for the interior point method is implemented, we can solve the barrier problem for a fixed barrier parameter $\mu$, as long as we are provided an initial feasible path. Pseudo-code of the procedure given an initial feasible path $p$ is described in Appendix B4{\ifx\arxivversion\undefined{ of the extended version of this paper in~\cite{shortest_path_arxiv}}\fi}.

\section{Initial Feasible Path Generation}
The last missing part of the full algorithm is a procedure for generating an initial feasible path. In the unconstrained case, the straight line connecting $u_0$ to \edit{$u_{K+1}$} is a feasible path (and, in fact, the shortest one). To include the effect of constraints, we introduce a homotopy-like procedure: we start with a relaxed version of the problem where the straight line is feasible and then we solve increasingly tighter relaxations until the original problem is recovered. A way to interpret this procedure is to consider the constraints as continuously pushing and deforming the straight line until a curved feasible path is obtained. If the transition problem is infeasible ($u_0$ and \edit{$u_{K+1}$} lie in different connected components of the feasible region), then at some point of the homotopy some constraints will try to cut the path to get each piece to a different connected component. If the path's corners are too close, such a transformation of the path would violate the constant speed constraint \eqref{eq:speed_constraint} and the homotopy would fail (see Fig. \ref{fig:case9mod}).

We next formally describe the path generation procedure. First, we notice that the power flow feasibility constraint ($g_i,i \in \mathcal{P}$, see \eqref{eq:power_flow_constraint}) is a special case as it is not differentiable on its boundary. This means that there exists no differentiable relaxation of it. Nevertheless, the power flow feasible region (i.e., the set of power injections for which a power flow solution exists) is typically much larger than the OPF constraints' feasible region, so we can thus assume that the straight line (in the space of control variables) does not violate the power flow feasibility constraint:
\begin{itemize}
  \item \textbf{Assumption~4:} The straight line joining $u_0$ and \edit{$u_{K+1}$} is contained in the power flow feasibility set $\mathcal{F}_0$.
\end{itemize}
Under Assumption~4, we do not need to relax the power flow feasibility constraint during the homotopy process. The homotopy procedure for addressing the remaining constraints is relatively simple. Assume that the user provides a path spacing $\{t_k\}_{k=0}^{K+1}$ satisfying \eqref{eq:path_spacing}. \edit{Let $p$ be the current candidate path. At the start of the procedure, our candidate path will be a straight line when projected to the space of control variables, so it satisfies
\begin{equation}
\label{eq:line_path_u}
    u_k = u_0 + t_k (u_{K+1} - u_0), \qquad k = 0,\ldots,K+1.
\end{equation}
The corresponding $x_k$ are computed by solving the power flow equations, that is
\begin{equation}
\label{eq:line_path_x}
f(u_k , x_k) = 0, \qquad k = 0,\ldots,K+1,
\end{equation}
and the candidate path is formed by applying \eqref{eq:p}.
Next we compute the relaxation parameter $\beta$ as the maximum violation of any constraint across all path corners (excluding the endpoints), multiplied by a margin $\kappa_{\beta} > 1$:
\begin{equation}
\label{eq:relaxation_vector}
    \beta = \max_{\begin{subarray}{c}i=1,\ldots,K \\ j \in \mathcal{I}\end{subarray}} \left( g_j(p_i) \right).
\end{equation}
The vector of relaxed constraints, $g_{\beta,\mathcal{I}}$, is defined for any $j \in \mathcal{I}$ and any $p_i$ as
\begin{equation}
\label{eq:relaxed_constraints}
    \left(g_{\beta,\mathcal{I}}(p_i)\right)_j = \left\{ {\begin{array}{ll}
   g_j(p_i), & j \in \mathcal{P} \\
   g_j(p_i) - \kappa_{\beta} \beta, & {\rm else} \\
\end{array}} \right. ,
\end{equation}
for a constant margin $\kappa_{\beta} > 1$. Consequently, we also define
\begin{equation}\label{eq:relaxed_cI}
    c_{\beta,\mathcal{I}}(p) = [g_{\beta,\mathcal{I}}(p_i)]^K_{i=1} .
\end{equation}
Clearly the path $p$ is contained in the relaxed feasible set defined by $g_{\beta,\mathcal{I}}$. More formally, this means that $c_{\beta,\mathcal{I}}(p) < 0$.
}

\edit{
If $\beta <0$ for the straight line path, then no homotopy is needed at all: the straight line is feasible and optimal. If $\beta > 0$ we exploit the nature of the interior point solver to drive the path towards feasibility. If we choose $\kappa_{\beta}$ close to (but still greater than) $1$, then the boundary of each violated constraint's relaxation will be very close to some corner of $p$, and the interior path iteration will naturally push the path towards the interior of the (relaxed) feasible region. By using a large barrier parameter $\mu_{\rm hi}$, we can obtain a new path that will not be close to any boundary of the relaxed constraint vector $g_{\beta,\mathcal{I}}$, allowing us to reduce the relaxation parameter $\beta$. Thus, we just need to recompute $\beta$ and repeat this process until $\beta$ is close enough to $0$, indicating that the corner points of the path satisfy the original (non-relaxed) constraints. If this process stagnates for any reason ($\beta$ stops decreasing), we report failure under suspicion that a feasible path may not exist (see Fig.~\ref{fig:case9mod}).
}

\edit{
Pseudo-code of the complete shortest path algorithm, including the generation of a feasible path, is given by Algorithm~1. We reuse the variables of each relaxation step as a warm start for the next relaxation to reduce computation time. Upon finding a feasible path, we compute the \emph{shortest} path by calling the interior point solver with a small barrier parameter $\mu$. To determine if the algorithm is making enough progress in decreasing $\beta$, we consider its relative decrease, $\delta_{\beta}$. If at any iteration $\delta_{\beta}$ is not greater than the user-specified tolerance $\epsilon_{\rm tol}$, the algorithm assumes that $\beta$ has stagnated and reports failure. Conversely, if $\delta_{\beta} > \epsilon_{\rm tol}$ we assume that enough progress has been made, and we compute new relaxation steps. In particular, this means that the interior point iteration does not need to run until full convergence during the homotopy process; the execution can be interrupted as soon as the new $\beta$ has decreased enough.
}

\edit{
Some OPF cases have inequalities that are so close that they roughly behave like equalities, making the feasible region nearly a lower-dimensional manifold with no interior. In such cases, the interior point algorithm may present convergence difficulties or even fail completely. As a safeguard against these issues, the last solver call uses the relaxed constraints $g_{\beta,\mathcal{I}}$ with a small relaxation parameter $\beta = \epsilon_{\rm comp}$. This slightly increases the size of the feasible region's interior, so that the solver has enough ``space'' in the feasible set to move the candidate path towards the solution. As a consequence, if $0 \leq \beta < \epsilon_{\rm comp}$ for the straight line, then the algorithm still treats it as feasible and accepts the path.
}

\begin{algorithm}
 \caption{Shortest Path Algorithm (Outer Loop)}
 \edit{
 \begin{algorithmic}[1]
 \Procedure{ShortestPath}{$f$, $g_{\mathcal{I}}$, $\{t_k\}_{k=0}^{K+1}$, $\kappa_{\beta}$, $\mu_{\rm hi}$, $\mu_{\rm lo}$, $\epsilon_{\rm tol}$, \hspace*{0.8em}${\rm iter}_{\max}$, $\tau$, $\gamma$, $\eta$, $\nu_0$, $\kappa_{\nu}$, $\epsilon_{\rm comp}$, $\rho_{\max}$}%, where $conf = $ (${\rm iter}_{\max}$, $\tau$, $\gamma$, $\eta$, $\epsilon_{\rm comp}$, $\rho_{\max}$) are configuration parameters
  \State compute $u_k$ from \eqref{eq:line_path_u} \textbf{and} compute $x_k$ from \eqref{eq:line_path_x}
  \State compute $p$ from \eqref{eq:p} \textbf{and} compute $\beta$ from \eqref{eq:relaxation_vector}
  \State \textbf{if} $\beta < \epsilon_{\rm comp}$ \textbf{then} \textbf{return} $p$, $\beta$
  \State compute $g_{\beta, \mathcal{I}}$ from \eqref{eq:relaxed_constraints} \textbf{and} compute $c_{\beta,\mathcal{I}}(p)$ from \eqref{eq:relaxed_cI}
  \LComment{default values for barrier problem vars:}
  \State $v \gets 0$, $y \gets 0$, $s \gets -c_{\beta, \mathcal{I}}(p)$, $z \gets \mu \vec{1} \oslash s$
  \While{$\beta \geq \epsilon_{\rm comp}$}
  \State $p,s,v,y,z \gets$ call \Call{BarrierSolve}{$f$, $g_{\beta, \mathcal{I}}$, $p$, $\mu_{\rm hi}$, $\ldots$}, but interrupt execution as soon as $\max_{i,j} \left( g_j(p_i) \right) < (1 - \epsilon_{\rm tol}) \beta$
  \State assign $\beta^- \gets \beta$ \textbf{and} compute $\beta$ from \eqref{eq:relaxation_vector}
  \State $\delta_{\beta} = (\beta^- - \beta) / \beta^-$
  \If{$\delta_{\beta} \leq \epsilon_{\rm tol}$ \textbf{and} $\beta \geq \epsilon_{\rm comp}$}
  \State report failure \textbf{and break}
  \EndIf
  \State compute $g_{\beta, \mathcal{I}}$ from \eqref{eq:relaxed_constraints}
  \EndWhile
  \If{$\beta < \epsilon_{\rm comp}$}
  \State assign $\beta \gets \epsilon_{\rm comp}$ \textbf{and} compute $g_{\beta, \mathcal{I}}$ from \eqref{eq:relaxed_constraints}
  \State $p,s,v,y,z \gets$ \Call{BarrierSolve}{$f$, $g_{\beta, \mathcal{I}}$, $p$, $\mu_{\rm lo}$, $\ldots$}
  \EndIf
  \State \textbf{return} $p$, $\beta$%, $E$
  \EndProcedure
 \end{algorithmic}} 
\end{algorithm}

\section{Numerical Experiments}
This section describes experiments performed to assess the performance of the proposed algorithm. We provide a public implementation of the algorithm, illustrative examples, and experiments on power systems of different scales.

\subsection{Implementation}
We developed a Julia code that implements the shortest path algorithm. The code is publicly available at the following page:

\vspace{0.3em}

\noindent
\href{https://github.com/djturizo/Shortest-Path-OPF}{\textcolor{blue}{\texttt{github.com/djturizo/Shortest-Path-OPF}}}

\vspace{0.3em}

\noindent
All experiments were run using Julia 1.10 on a Windows 11 PC with 32GB of RAM and an AMD Ryzen\textsuperscript{\tiny\texttrademark} PRO 7840U CPU \edit{with 8 physical cores and 16 parallel threads}. Unless specified otherwise, we used the following parameters:
\begin{align*}
K &= 9, \quad &t_k &= 0.05 \cdot k, \quad  &k &= 0,\ldots,K+1, \\
\kappa_{\beta} &= 1.01, \quad &\mu_{\rm hi} &= 0.05, \quad
&\mu_{\rm lo} &= 10^{-5}, \\ 
\epsilon_{\rm tol} &= 10^{-3}, \quad &{\rm iter}_{\max} &= 100, &\tau &= 0.99, \\
\gamma &= 0.5, \quad &\eta &= 10^{-4}, &\nu_0 &=10^{-6}, \\
\kappa_{\nu} &= 0.1, &\epsilon_{\rm comp} &= 10^{-6}, &\epsilon_{\rm ls} &= 10^{-2}, \\
\rho_{\max} &= 100.0.
\end{align*}
The power flow equations were solved using the Newton-Raphson method with a tolerance of $10^{-8}$ and a limit of $20$ iterations (see Step 2 of Algorithm 1). The shortest path algorithm uses a network model with one generator per node at most and rectangular coordinates for the voltage phasors, in order to have quadratic power flow equations and constraints (except for line flow constraints). Some test cases have multiple generators in a single node, but it is possible to compute a single equivalent generator. Angle difference constraints can be written as quadratic inequalities whenever the corresponding angle limit lies in the interval $(-\pi/2,\pi/2)$ (see \cite{opf_qc_relaxation}), which is often the case in practice. 
% In practice the angle limits are most likely in this range, so it is safe to assume so, for simplicity of implementation (the PowerModels.jl Julia package adopts this assumption too, for example \cite{powermodels}). 
% For these reasons, we introduced the following modifications to all cases used in the experiments:
% \begin{itemize}
%  \item Multiple generators in a single node were reduced to a single equivalent unit by adding the active and reactive power limits.
%   \item Angle difference constraints outside $(-\pi/2,\pi/2)$ were ignored entirely.
% \end{itemize}
% These simplifications are inconsequential for studying the algorithm behaviour through the experiments.

During the execution of the experiments, we noticed that evaluating the power flow feasibility constraint \eqref{eq:power_flow_constraint} took a significant portion of the execution time, but it was never active. This is consistent with the expectation that the boundary of the power flow feasibility constraint is significantly larger than that of all other constraints, so the feasible set ends up being determined by the standard OPF constraints. This means that the power flow feasibility constraint has no effect at all on the results of the shortest path algorithm (and we confirmed this on the experiments). We thus ignored this constraint in our experiments to increase the execution speed of the algorithm.

\subsection{Example: Two Variants of the 9-Bus Case}
To illustrate how the algorithm works in different situations, we next use the 9-bus OPF case of M{\sc atpower} \cite{matpower_manual}. The system has three generators, at nodes 1 to 3, with node 1 being the slack node. The control variables are the voltage magnitudes of the generators ($V_1,V_2,V_3$) and the active power of non-slack generators ($P_{G2}, P_{G3}$). We consider two variants of the 9-bus case obtained by modifying the system parameters. The first one, called variant 1 from now on, is modified to introduce an obstacle in the feasible region. First we set the generator voltage magnitudes to be $1$ p.u. ($V_1 = V_2 = V_3 = 1$). The control vector in the subspace is chosen as $u = [P_{G2}, P_{G3}]^T$. We generate the obstacle by setting the lower reactive power limit of the generator at bus 3 to $-2$ MVA ($Q_{G3 \min} = -0.02$). For the endpoints we choose $u_0 = [0.5, 0.5]^T$ and $u_{\infty} = [1.5, 1.3]^T$.

We executed the shortest path algorithm, obtaining the results illustrated in Fig. \ref{fig:case9hole}. The feasible region is colored in green, and the relaxations generated by the algorithm are colored in red hues. Later iterations have smaller constraint violations, which lead to tighter relaxations, represented with darker shades of red. The shortest path is computed for each relaxation. Paths corresponding to tighter relaxations are colored with lighter shades of blue for contrast. The figures shows the continuous  deformation of the path as it moves away from the boundary. After multiple iterations of this process, the algorithm obtains a feasible path, and then the final iteration tightens the candidate path while preserving feasibility.

We next consider another modification, called variant 2 from now on, where no feasible path exists. For this variant we used the 9-bus OPF case of {\sc Matpower} \cite{matpower_manual}, modified as in \cite{case9mod}. We also fix the generator voltage magnitudes to the following p.u. values: $V_1 = 0.920, V_2 = 0.935, V_3 = 0.943$. The control vector in the subspace is chosen as $u = [P_{G3}, P_{G2}]^T$. The endpoints are chosen to be from different connected regions. Namely, we chose $u_0 = [0.12, 0.16]^T$ and $u_{\infty} = [1.57, 0.24]^T$.

We executed the shortest path algorithm, obtaining the results illustrated in Fig. \ref{fig:case9mod}. The figure shows how tighter relaxations become narrower around the center in an attempt to eventually break into two components. As a result, the candidate path ends up ``choked'' in this narrow passage, which attempts stretch the path, separating the corner points into two distant clusters. Such a deformation would violate the constant speed constraints that require the corner points to preserve the relative distance between them. As a result, the algorithm is unable to reduce the constraint violations any further, and it appropriately reports failure to find a feasible path.

As a last experiment for this case, we modified the value of $\mu_{\rm lo}$ to observe its effect on the computation of the shortest path from a given feasible path (Step 17 of Algorithm 1). For this experiment, we consider the variant 1 of the 9-bus case and we solve the shortest path problem for multiple values of $\mu_{\rm lo}$ in the range $[10^{-12}, 10^{-2}]$. For each value of $\mu_{\rm lo}$, we compute the length of the shortest path found as the percentage increase over the path length of the unconstrained solution (i.e., the straight line joining the endpoints). As shown in Fig.~\ref{fig:barrier_parameter}, the feasible path generated by the homotopy process may be significantly larger than the shortest path, warranting the last optimization process that is executed with a lower barrier parameter. For small values of $\mu_{\rm lo}$, observe that the solution does not change significantly.

\newcommand{\figscale}{0.5}
\begin{figure}[tbp!]
    \centering
  \includegraphics[scale=\figscale]{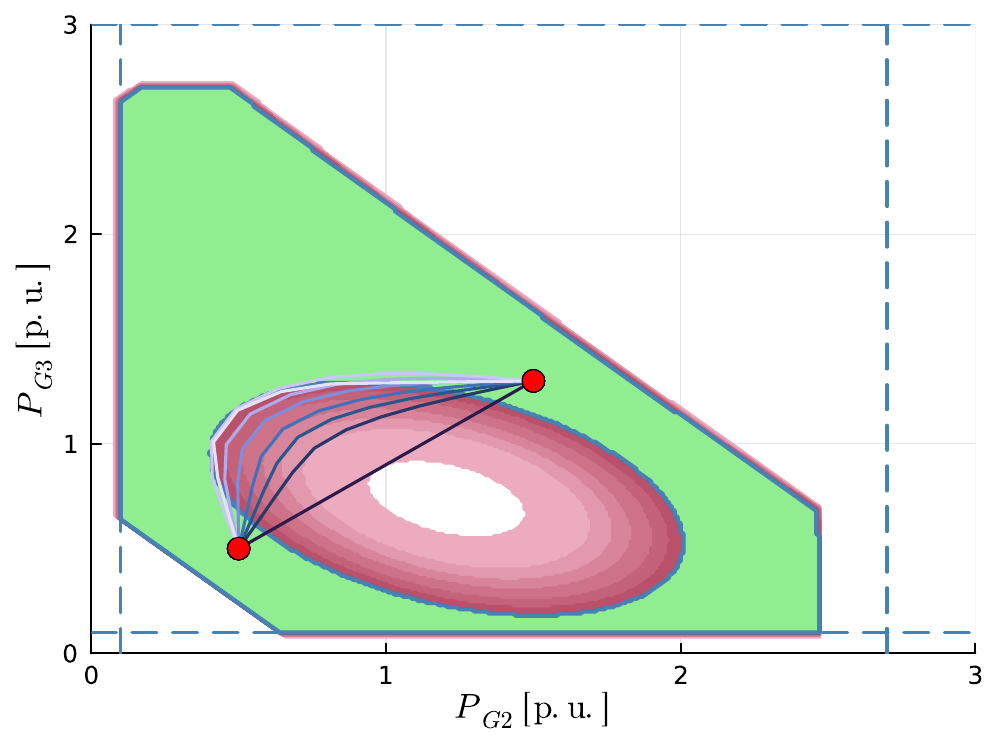}
  	\caption{Variant 1 of the 9-bus case. The straight line path is not feasible, but the algorithm deforms the path to achieve feasibility.}
  	\label{fig:case9hole}
\end{figure}

\begin{figure}[tbp!]
    \centering
  \includegraphics[scale=\figscale]{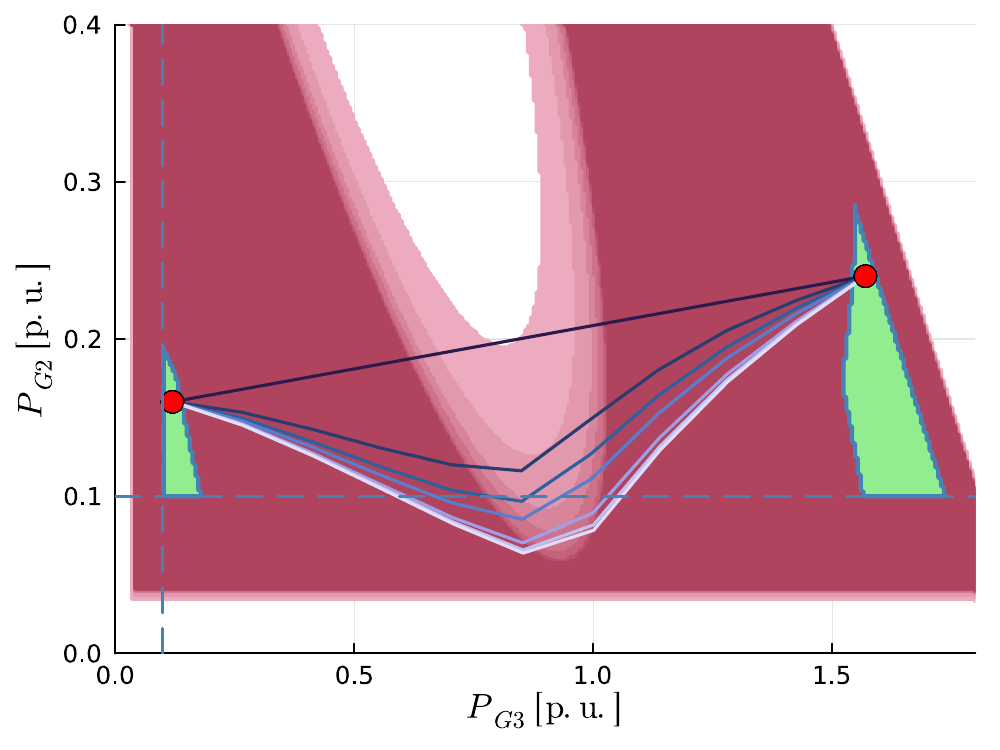}
  	\caption{Variant 2 of the 9-bus case. The endpoints are disconnected, so the algorithm fails to find a feasible path.}
  	\label{fig:case9mod}
   \vspace{-0.5em}
\end{figure}

\begin{figure}[tbp!]
    \centering
  \includegraphics[scale=\figscale]{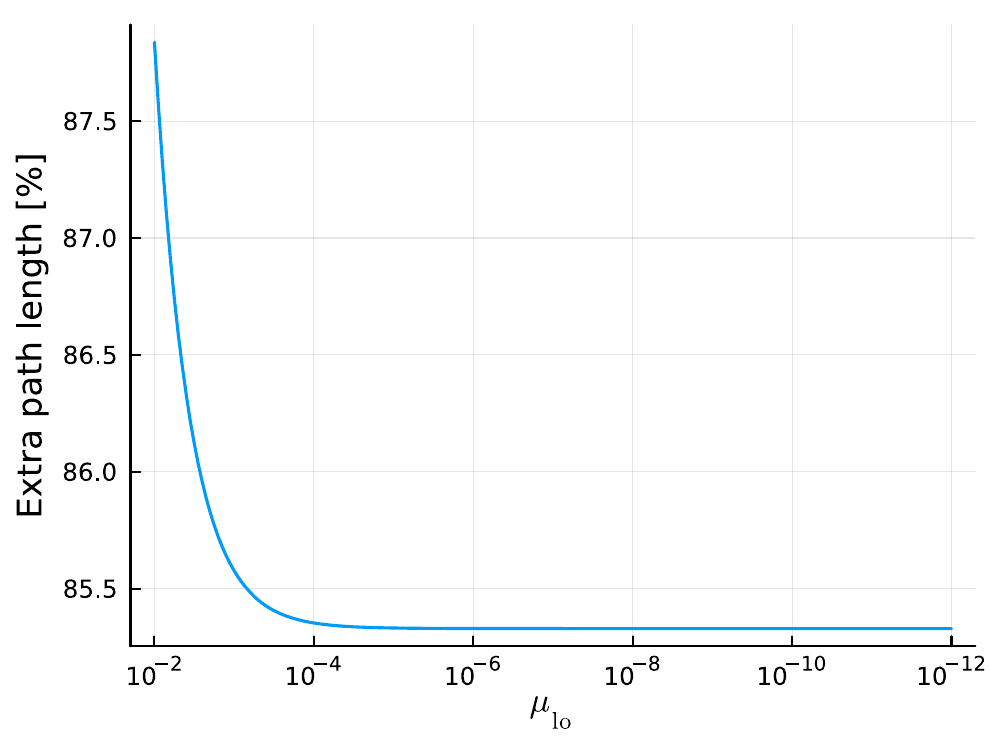}
  	\caption{Variant 1 of the 9-bus case. For smaller barrier parameters, the path length decreases until stabilizing at the shortest path. However, very small barrier parameters introduce numerical artifacts.}
  	\label{fig:barrier_parameter}
   \vspace{-1em}
\end{figure}

\subsection{Multiple scale OPF cases}
For this experiment, we used multiple OPF benchmark test cases from the Power Grid Library PGLib \cite{pglib}. We selected nine cases of different sizes, ranging from 14 to 118 buses. Since these cases have high-dimensional feasible spaces that are hard to visualize, selecting non-trivial endpoints (where the straight line is not feasible) is not always straightforward. We therefore follow the heuristic presented in \cite{lee_feasible_path}: namely, we selected the endpoints as the solution of the minimum loss problem and the OPF solution. For each test case, we computed the maximum constraint violation (relaxed with parameter $\epsilon_{\rm comp}$) over the path points in the starting straight line (before running the algorithm, \edit{in the column called ``Max. con. before''}) and over the final path resulting from running the algorithm \edit{(in the column called ``Max. con. after'')}, ignoring the endpoints (because they are fixed and not modified by the algorithm). If the maximum constraint violation after running the algorithm is negative, then the final path found is feasible and the algorithm has thus identified a shortest path (in a local sense, at least). 

We also computed solution and objective function metrics as follows: let $p(t)$ be the piece-wise linear shortest path approximation (as defined in \eqref{eq:p_t}) resulting from the algorithm, and let $L(t)$ be the straight line path associated with the endpoints. With both $p$ and $L$ parameterized by arclength, we computed the relative difference between the paths as
\begin{equation*}
    \textrm{path-diff\%} = \frac{\int_0^1{{\|p(t) - L(t)\| dt}}}{\int_0^1{{\|L(t)\| dt}}} \times 100\% .
\end{equation*}
Similarly, we computed the relative objective function increase, or gap, with respect to the value at the straight line:
\begin{equation*}
    \textrm{obj-fun-gap\%} = \frac{\int_0^1{{\|p(t)\| dt}} - \int_0^1{{\|L(t)\| dt}}}{\int_0^1{{\|L(t)\| dt}}} \times 100\% .
\end{equation*}

The results are reported in Table I. The algorithm succeeded in finding a locally shortest path on all test cases \edit{In particular, our method found the shortest path for the 57-bus, 89-bus, 162-bus, 200-bus, 240-bus and 300-bus cases, where the approach of \cite{lee_feasible_path} failed to generate a feasible path (in the sense that this approach could not drive the optimality gap with respect to the OPF solution endpoint to a margin below 1\%, or diverged entirely). We also remark that the paths found by our method are composed of 10 linear pieces, regardless of the system size. This is by design, as we chose $K=9$ for the experiments. In contrast, the feasible paths generated by \cite{opf_sequence_path} have linear pieces equal to the number of controlled variables, which means the amount of control actions increases with the system size. For example, for case 300 the approach of \cite{opf_sequence_path} generates a feasible path with 189 linear pieces, whereas our approach generates a feasible path 10 linear pieces, with the possibility of producing paths with more or less linear pieces if desired.}

In many test cases the straight line is slightly infeasible, \edit{and as a result only small deformations are required to obtain a feasible path.} One notable exception being the 60-bus case where the straight line has violations as large as is $2.22$ p.u., \edit{but even in that case the difference between the shortest path and the straight line path is around {\ttilde}$1\%$.} In the 14- and 30-bus cases, the straight line is feasible, so the algorithm immediately accepts the straight line without performing the homotopy process or the final optimization step. \edit{These results suggest that the OPF feasible regions of practical cases are often ``almost'' convex, in the sense that if a straight line joining two feasible points is not feasible, usually a small modification of the path is all it takes to recover feasibility. We note that Variant 1 of case 9 is not a typical test case, as it has been modified to introduce a large obstacle between the endpoints. Consequently, the shortest path for that case is much larger than the straight line path.}

We also executed the algorithm on eight test cases selected from \cite{molzahn_cases}, which were crafted specifically to be challenging for OPF solvers. The results for this second batch of test cases are shown in Table II. As expected, these test cases proved to be more challenging, as in one of the eight presented cases the algorithm failed to find a feasible path. We remark that it is possible that the endpoints of those cases are not connected, but the algorithm may also fail even if a feasible path exists (after all, this is a non-convex optimization problem). For the seven remaining cases the straight line was already feasible in two of them, and for the other five the algorithm succeeded in generating a locally shortest path. We observed that the relative path differences are usually larger than in the PGLib test cases, and we suspect this tendency is due to more pronounced non-convexities resulting from the fact that these test cases have been engineered to challenge OPF solvers.

\subsection{Tests on the number of control actions}
\edit{As we mentioned in the introduction, each linear segment of the path represents a single control action, and so the number of linear segments is equal to the number of control actions. While it is desirable for the operator to use a path with few control actions, this increases the risk of violations during the transition. Thus, there is a trade-off between simplicity and feasibility that must be considered when choosing the number of control actions. To study this phenomenon, we performed an experiment where we executed the shortest path algorithm on some text cases while varying the number linear segments ($K+1$) geometrically from 2 to 128. The breakpoints were spaced uniformly for all cases. The results of this experiment are shown in Table \ref{tbl:breakpoint_tests}.}

\edit{From the results, we see that the outcome of whether the algorithm finds a feasible path or not remains consistent for each test case, regardless of the number of segments. This suggests that our method could be reliably used as an oracle for the likelihood of the existence of a feasible path between the endpoints. Another observation is that the path length increases with $K$ for all test cases. This is to be expected: as we increase the number of segments, the piecewise linear path becomes a better approximation of the continuous shortest path. Table \ref{tbl:breakpoint_tests} also reports the number of iterations required for the algorithm to reach a decision. In particular, apart from Variant 1 of the 9-bus case, there is a trend where increasing $K$ leads to an increasing on the number of iterations. This means that, while the cost per iteration scales linearly with $K$, the total execution cost will scale superlinearly. This result also suggests that the optimization problem becomes more challenging as $K$ increases. While we used uniform breakpoint spacing for this experiment, it is possible to implement better informed methodologies for breakpoint spacing. An adaptive spacing technique could possibly allocate more breakpoints to sections of the feasible path close to strong non-convexities of the boundary, while using less breakpoints in section of the path far from the boundary. Development of such techniques is left for future work.}

\begin{table*}[t]
\centering
\caption{Results of running the shortest path algorithm on PGLib test cases}
\label{tbl:test_cases}
\hspace*{-1em}
%\scalebox{0.9}{\hspace*{-5em}
\texttt{\edit{
\begin{tabular}{|l|r|r|r|l|l|r|r|r|}
\hline
\textrm{Test case} & $ n $ & $g$ & \textrm{Max. con.} & \textrm{Exec.} & \textrm{Found} & \textrm{Max. con.} & \textrm{Path} & \textrm{Obj. fun.}\\
 & & & \textrm{before [p.u.]} & \textrm{time [s]} & \textrm{path?} & \textrm{after [p.u.]} & \textrm{diff.} & \textrm{gap} \\
\hline
\textrm{case9 (Variant 1)}  &9      &2      &2.79E-2        &0.2    &Yes    &-6.26E-7       &85.3\% &34.4\%  \\
\hline
\textrm{case14\_ieee}   &14     &5      &-9.94E-7       &0.0    &Yes    &-9.94E-7       &0.00\% &0.00\%  \\
\hline
\textrm{case24\_ieee\_rts}      &24     &11     &9.92E-4        &4.7    &Yes    &-1.00E-6       &2.13\% &0.03\%  \\
\hline
\textrm{case30\_ieee}   &30     &6      &-9.92E-7       &0.0    &Yes    &-9.92E-7       &0.00\% &0.00\%  \\
\hline
\textrm{case39\_epri}   &39     &10     &9.66E-2        &0.5    &Yes    &-2.97E-3       &2.86\% &0.06\%  \\
\hline
\textrm{case57\_ieee}   &57     &7      &2.50E-3        &0.5    &Yes    &-9.99E-7       &1.53\% &0.02\%  \\
\hline
\textrm{case60\_c}      &60     &23     &2.22E+0        &3.6    &Yes    &-1.00E-6       &2.98\% &0.06\%  \\
\hline
\textrm{case73\_ieee\_rts}      &73     &33     &9.59E-4        &1.6    &Yes    &-1.00E-6       &3.62\% &0.10\%  \\
\hline
\textrm{case89\_pegase} &89     &12     &2.27E-2        &2.9    &Yes    &-8.59E-4       &1.52\% &0.02\%  \\
\hline
\textrm{case118\_ieee}  &118    &54     &2.42E-2        &3.4    &Yes    &-1.00E-6       &3.64\% &0.09\%  \\
\hline
\textrm{case162\_ieee\_dtc}     &162    &12     &1.36E-4        &3.6    &Yes    &-3.63E-5       &1.75\% &0.02\%  \\
\hline
\textrm{case200\_activ} &200    &38     &2.20E-2        &5.0    &Yes    &-1.00E-6       &3.82\% &0.10\%  \\
\hline
\textrm{case240\_pserc} &240    &53     &6.00E-1        &168.4  &Yes    &-1.46E-3       &3.02\% &0.06\%  \\
\hline
\textrm{case300\_ieee}  &300    &69     &5.93E-2        &44.0   &Yes    &-1.00E-6       &3.64\% &0.10\%  \\
\hline
\textrm{case500\_goc}   &500    &113    &1.29E-1        &69.7   &Yes    &-1.35E-4       &7.47\% &0.40\%  \\
\hline
\end{tabular}%
}}\\[0.3em]
\edit{
``Max. con. before'': Maximum constraint violation over the path points in the starting straight line (before running the algorithm).\\
``Max. con. after'': Maximum constraint violation for the final path resulting from running the algorithm.
}
%}
\vspace*{-1em}
\end{table*}

\begin{table*}[t]
\centering
\caption{Results of running the shortest path algorithm on the test cases of \cite{molzahn_cases}}
\label{tbl:test_cases_hard}
\hspace*{-1em}
%\scalebox{0.9}{\hspace*{-5em}
\texttt{\edit{
\begin{tabular}{|l|r|r|r|l|l|r|r|r|}
\hline
\textrm{Test case} & $ n $ & $g$ & \textrm{Max. con.} & \textrm{Exec.} & \textrm{Found} & \textrm{Max. con.} & \textrm{Path} & \textrm{Obj. fun.}\\
 & & & \textrm{before [p.u.]} & \textrm{time [s]} & \textrm{path?} & \textrm{after [p.u.]} & \textrm{diff.} & \textrm{gap} \\
\hline
\textrm{nmwc3acyclic\_connected\_feasible\_space}       &3      &2      &-1.85E-2       &0.0    &Yes    &-1.85E-2       &0.00\% &0.00\%   \\
\hline
\textrm{nmwc3acyclic\_disconnected\_feasible\_space}    &3      &2      &1.96E-5        &4.3    &Yes    &-3.65E-5       &0.72\% &0.01\%   \\
\hline
\textrm{nmwc3cyclic}    &3      &2      &9.28E-3        &0.0    &No     &3.18E-3        &-      &-       \\
\hline
\textrm{nmwc4}  &4      &2      &-2.12E-4       &0.0    &Yes    &-2.12E-4       &0.00\% &0.00\%  \\
\hline
\textrm{nmwc5}  &5      &2      &2.34E-2        &0.0    &Yes    &-1.46E-3       &2.57\% &0.05\%  \\
\hline
\textrm{nmwc14} &14     &5      &9.26E-4        &0.1    &Yes    &-2.31E-5       &4.00\% &0.11\%  \\
\hline
\textrm{nmwc24} &24     &11     &3.71E-3        &0.3    &Yes    &-9.92E-7       &2.41\% &0.04\%  \\
\hline
\textrm{nmwc57} &57     &7      &2.90E-3        &0.7    &Yes    &-3.75E-5       &5.76\% &0.23\%  \\
\hline
\end{tabular}
}}
%}
\vspace*{-1em}
\end{table*}

\begin{table*}[t!]
\centering
\caption{Results of running the shortest path algorithm with differing numbers of breakpoints for selected test cases}
\label{tbl:breakpoint_tests}
\hspace*{-1em}
%\scalebox{0.9}{\hspace*{-5em}
\texttt{\edit{
\begin{tabular}{|l|r|r|r|l|l|r|r|r|}
\hline
\textrm{Test case} & $K$ & \textrm{\# iter.} & \textrm{Max. con.} & \textrm{Exec.} & \textrm{Found} & \textrm{Max. con.} & \textrm{Path} & \textrm{Obj. fun.}\\
 & & & \textrm{before [p.u.]} & \textrm{time [s]} & \textrm{path?} & \textrm{after [p.u.]} & \textrm{diff.} & \textrm{gap} \\
\hline
\multirow{6}{*}{\textrm{case9 (Variant 1)}}  &1      &54     &2.79E-2        &0.6    &Yes    &-3.91E-7       &73.6\% &24.2\%  \\
\cline{2-9}
        &3      &48     &2.79E-2        &0.5    &Yes    &-4.64E-7       &80.7\% &31.6\%  \\
\cline{2-9}
        &7      &44     &2.79E-2        &0.6    &Yes    &-5.15E-7       &84.9\% &34.2\%  \\
\cline{2-9}
        &15     &24     &2.79E-2        &0.9    &Yes    &-9.46E-7       &85.7\% &34.7\%  \\
\cline{2-9}
        &31     &15     &2.79E-2        &2.1    &Yes    &-1.82E-6       &85.9\% &34.8\%  \\
\cline{2-9}
        &63     &12     &2.79E-2        &3.1    &Yes    &-1.08E-6       &86.0\% &34.9\%  \\
\cline{2-9}
        &127    &13     &2.79E-2        &8.1    &Yes    &-7.22E-6       &86.0\% &34.9\%  \\
\hline
\multirow{6}{*}{\textrm{case57\_ieee}}   &1      &7      &2.50E-3        &0.1    &Yes    &-9.64E-7       &1.11\% &0.01\%  \\
\cline{2-9}
        &3      &8      &2.50E-3        &0.2    &Yes    &-9.50E-7       &1.60\% &0.02\%  \\
\cline{2-9}
        &7      &8      &2.50E-3        &0.3    &Yes    &-9.99E-7       &1.35\% &0.01\%  \\
\cline{2-9}
        &15     &9      &2.50E-3        &0.8    &Yes    &-1.00E-6       &1.17\% &0.01\%  \\
\cline{2-9}
        &31     &12     &2.50E-3        &2.0    &Yes    &-1.00E-6       &1.30\% &0.01\%  \\
\cline{2-9}
        &63     &13     &2.50E-3        &5.0    &Yes    &-1.00E-6       &1.77\% &0.02\%  \\
\cline{2-9}
        &127    &14     &2.50E-3        &13.4   &Yes    &-1.00E-6       &2.47\% &0.05\%  \\
\hline
\multirow{6}{*}{\textrm{nmwc3cyclic}}    &1      &8      &9.28E-3        &0.0    &No     &3.18E-3        &-      &-       \\
\cline{2-9}
        &3      &8      &9.28E-3        &0.0    &No     &3.18E-3        &-      &-       \\
\cline{2-9}
        &7      &8      &9.28E-3        &0.0    &No     &3.18E-3        &-      &-       \\
\cline{2-9}
        &15     &8      &9.28E-3        &0.0    &No     &3.18E-3        &-      &-       \\
\cline{2-9}
        &31     &9      &9.28E-3        &0.1    &No     &3.18E-3        &-      &-       \\
\cline{2-9}
        &63     &9      &9.29E-3        &0.2    &No     &3.19E-3        &-      &-       \\
\cline{2-9}
        &127    &106    &9.29E-3        &8.7    &No     &3.19E-3        &-      &-       \\
\hline
\multirow{6}{*}{\textrm{nmwc57}} &1      &9      &2.90E-3        &0.1    &Yes    &-1.02E-5       &5.31\% &0.14\%  \\
\cline{2-9}
        &3      &11     &2.90E-3        &0.2    &Yes    &-1.45E-5       &5.40\% &0.19\%  \\
\cline{2-9}
        &7      &14     &2.90E-3        &0.5    &Yes    &-3.25E-5       &5.59\% &0.21\%  \\
\cline{2-9}
        &15     &15     &2.90E-3        &1.0    &Yes    &-2.80E-5       &5.91\% &0.25\%  \\
\cline{2-9}
        &31     &21     &2.90E-3        &2.9    &Yes    &-1.76E-5       &6.49\% &0.30\%  \\
\cline{2-9}
        &63     &35     &2.90E-3        &10.7   &Yes    &-8.60E-6       &7.66\% &0.43\%  \\
\cline{2-9}
        &127    &31     &2.90E-3        &23.8   &Yes    &-4.55E-6       &9.08\% &0.60\%  \\
\hline
\end{tabular}
}}
%}
\vspace*{-1em}
\end{table*}

\section{Conclusions}
\edit{In this paper, we developed an algorithm for computing a discretized feasible path from an initial feasible operating point to an optimal one (or between any two feasible points in general), such that the amplitude of the control actions required to transition from one point to another is minimized. Minimization of control action amplitude is equivalent to minimizing the transition path length, which leads to a discretized shortest path optimization problem. The path is represented as a sequence of intermediate feasible points, the number and relative spacing of which can be specified a priori.} The algorithm computes the intermediate points by solving a nonlinear optimization problem via a specialized interior point method, provided an initial feasible path is given. By leveraging the nature of barrier functions in interior point methods, an initial feasible path is found by solving a sequence of relaxed, but increasingly tighter relaxations of the shortest path problem, where in the initial relaxation the straight line joining the endpoints is feasible. The resulting sequence of shortest paths converges to a feasible path of the original problem in a finite number of iterations. The interior point solver for the algorithm was modified to exploit the sparse block tridiagonal structure of the shortest path problem. Multiple numerical experiments show that the proposed algorithm is can effectively compute a shortest path for a specified number of intermediate points.

The algorithm we developed tackles the issues of the number and amplitude of control actions in the problem of transitioning between operating points. One issue not considered in this work is feasibility of the path in the continuous sense. While our algorithm provides a sequence of intermediate points that are guaranteed to be feasible, the lines joining them may cross the boundary of the feasible set. \edit{Possible avenues of future work are extending the current algorithm with a methodology to provide mathematical guarantees that the line pieces comprising the discrete path are entirely contained in the feasible set and the development of adaptive, non-uniform breakpoint spacing strategies.}

\bibliographystyle{IEEEtran}
% argument is your BibTeX string definitions and bibliography database(s)
\bibliography{IEEEabrv,refs}

%----------------Start of conditional section------------------------
\ifx\arxivversion\undefined
\else
\section*{Appendices}

\subsection*{Appendix A: Structure of the Lagrangian Hessian}

The Lagrangian of the Hessian, $\nabla^2_{pp} L$, has components associated with the objective function and the constraints. Each of these components has a specific matrix structure the we analyze next. First we compute $\nabla^2_{pp} L$ using the independence of $s$ on $p$:
\begin{subequations}
\begin{align}
\nabla^2_{pp} L &= \nabla^2_{pp} \phi + \nabla^2_{pp} (v^T [f(p_i)]^K_{i=1}) + \nabla^2_{pp} (y^T c_{\mathcal{E}}) \nonumber \\
&\quad + \nabla^2_{pp} (z^T ([g_{\mathcal{I}}(p_i)]^K_{i=1} + s)), \\
\nabla^2_{pp} L &= \nabla^2_{pp} \phi + \sum_{i=1}^{K}\sum_{j =1}^{2n}{\left(v_i\right)_j \nabla^2_{pp} f_j(p_i)} + \sum_{j \in \mathcal{E}} {(y)_j \nabla^2_{pp} c_j} \nonumber \\
&\quad + \sum_{i=1}^{K}\sum_{j \in \mathcal{I}}{\left(z_i\right)_j \nabla^2_{pp} g_j(p_i)}.
\end{align}
\end{subequations}
First we analyze the Hessian of the objective function, $\nabla^2_{pp} \phi$. Let $I\! \in\! \mathbb{R}^{2g \times 2g}$\! be the $2g \times 2g$ identity matrix, and define the matrix $I'$ as
\begin{equation}
\label{eq:Iprime}
    I' = \begin{bmatrix}
        I & 0 \\
        0 & 0_{2n \times 2n}
    \end{bmatrix} \in \mathbb{R}^{2(g+n) \times 2(g+n)}.
\end{equation}
The derivatives of $\phi$ are%
\begin{subequations}
\begin{align}
\frac{\partial^2 \phi}{\partial p_i \partial p^T_i} &= \frac{2}{K+1} (w_i + w_{i+1})I', & i=1,\ldots,K, \\
\frac{\partial^2 \phi}{\partial p_{i-1} \partial p^T_i} &= \frac{\partial^2 \phi}{\partial p_i \partial p_{i-1}} = - \frac{2}{K+1} w_i I', & i=2,\ldots,K, \\
\frac{\partial^2 \phi}{\partial p_i \partial p_j} &= 0, & |i-j| > 1,
\end{align}
\end{subequations}
so $\nabla^2_{pp} \phi$ is a constant, symmetric, and block tridiagonal matrix with symmetric blocks of size $2(g+n) \times 2(g+n)$. We can write $\nabla^2_{pp} \phi$~as%
\begin{subequations}
\label{eq:nabla2_phi}
\begin{align}
\nabla^2_{pp} \phi &= \frac{2}{K+1} (Y \otimes I'), \\
Y &= \left[\begin{matrix}
w_1 + w_2 & -w_2 &  &   \\
-w_2 & \ddots & \ddots &   \\
 &  \ddots & \ddots & -w_K \\
 &   & -w_K & w_K + w_{K+1}
\end{matrix}\right],
\end{align}
\end{subequations}
where $\otimes$ denotes the Kronecker product. The matrix $Y$ can be seen as the admittance matrix of a single loop circuit with line positive resistances given by $w_k,k=2,\ldots,K$, and two shunts corresponding to $w_1$ and $w_{K+1}$. This admittance matrix is guaranteed to be invertible \cite{turizo}. In particular, $Y$ can be factored as in \cite{turizo} to show that $Y$ is positive definite. Moreover, the eigenvalues of the Kronecker product correspond to all pairwise products between eigenvalues of the two factors (see exercise 7.8.11~(b) of \cite{eigenvalues}), so $\nabla^2_{pp} \phi$ is positive semidefinite. 

Next we consider the equality term $\nabla^2_{pp} (y^T c_{\mathcal{E}})$. We compute the derivative terms of $c_j, j \in \mathcal{E}$ as
\begin{subequations}
\begin{align}
\frac{\partial^2 c_j}{\partial p_j \partial p^T_j} &= 2(w_j - w_{j+1})I', \\
\frac{\partial^2 c_j}{\partial p_{j-1} \partial p^T_{j-1}} &= 2w_j I', \\
\frac{\partial^2 c_j}{\partial p_{j+1} \partial p^T_{j+1}} &= -2w_{j+1} I', \\
\frac{\partial^2 c_j}{\partial p_j \partial p^T_{j-1}} &= \frac{\partial^2 c_j}{\partial p_{j-1} \partial p^T_j} = -2w_j I', \\
\frac{\partial^2 c_j}{\partial p_j \partial p^T_{j+1}} &= \frac{\partial^2 c_j}{\partial p_{j+1} \partial p^T_j} = 2w_{j+1} I', \\
\frac{\partial^2 c_j}{\partial p_i \partial p_k} &= 0, \qquad \textrm{any other case}.
\end{align}
\end{subequations}
With a slight abuse of notation, we define
\begin{equation}
w_0 = (y)_0 = (y)_{K+1} = 0 \in \mathbb{R}.
\end{equation}
Notice that
\begin{equation}
\frac{\partial^2 (y^T c_{\mathcal{E}})}{\partial p_i \partial p^T_j} = \sum_{j \in \mathcal{E}} {(y)_j \frac{\partial^2 c_j}{\partial p_i \partial p^T_j}}.
\end{equation}
Therefore, we can write, for $j=1,\ldots,K$, that
\begin{subequations}
\label{eq:hessian_ytce}
\small
\begin{align}
\frac{\partial^2 (y^T c_{\mathcal{E}})}{\partial p_{j-1} \partial p^T_j} &= \frac{\partial^2 (y^T c_{\mathcal{E}})}{\partial p_j \partial p_{j-1}} = -2w_j ((y)_j - (y)_{j-1}) I', \\
\frac{\partial^2 (y^T c_{\mathcal{E}})}{\partial p_j \partial p^T_j} &= 2[w_j ((y)_j - (y)_{j-1}) + w_{j+1} ((y)_{j+1} - (y)_j)]I', \\
\frac{\partial^2 (y^T c_{\mathcal{E}})}{\partial p_i \partial p_j} &= 0, \qquad |i-j| > 1,
\end{align}
\end{subequations}
so the matrix $\nabla^2_{pp} (y^T c_{\mathcal{E}})$ is symmetric and block tridiagonal (with symmetric blocks of size $2(g+n) \times 2(g+n)$).

Next we consider the inequality term of the Lagrangian Hessian, $\nabla^2_{pp} (z^T ([g_{\mathcal{I}}(p_i)]^K_{i=1} + s))$. As every inequality constraint depends only on one specific $p_i$, it is easy to see that the inequality term is block diagonal, with symmetric blocks of size $2(g+n) \times 2(g+n)$. The same argument applies to the power flow term $\nabla^2_{pp} (v^T [f(p_i)]^K_{i=1})$, so it must be block diagonal with symmetric blocks of size $2(g+n) \times 2(g+n)$.
This implies that the Lagrangian Hessian, $\nabla^2_{pp} L$, is symmetric and block tridiagonal (with symmetric blocks of size $2(g+n) \times 2(g+n)$). In particular, we also have that $\nabla^2_{pp} (L - \phi - y^T c_\mathcal{E})$ is block diagonal, with symmetric blocks of size $2(g+n) \times 2(g+n)$.

\subsection*{Appendix B: Implementation Details of the Newton Iteration}

\par\indent
\subsubsection{Indefiniteness Correction}\par\indent

The computation of the Newton step is performed by solving \eqref{eq:newton_step_full}. We also require $\Delta p$ to be a descent direction of the objective function, in order to ensure sufficient progress towards the optimal solution across iterations. A sufficient condition for $\Delta p$ being a descent direction corresponds to the inertia of the Newton matrix of \eqref{eq:newton_step_full} being equal to $\left(K(2g+2n + |\mathcal{I}|), K(2n + |\mathcal{I}| + 1), 0\right)$ \cite{inertia_condition}. This condition is guaranteed in turn if the Hessian $\nabla_{pp}^2 L$ is positive definite (see \cite{nocedal}). The standard way to enforce this condition (as is done in IPOPT, for example \cite{ipopt}) is to factor the Newton matrix using the Bunch-Kaufman algorithm (see \cite{bunchkaufman,bounded_bk}) and then verify the inertia condition. If the inertia is not correct, then a positive diagonal perturbation is added to $\nabla^2_{pp}L$ and the Newton matrix is refactored. The process is repeated using increasingly larger perturbations until the inertia condition is satisfied \cite{ipopt}. This approach works for generic problems, but may require multiple factorizations of the matrix. Instead, we adopted a simpler heuristic derived from the structure of our specific problem to guarantee a descent direction, at the cost of additional line search evaluations (see the next subsection for details on the line search procedure).

Recall that $\nabla^2_{pp}\phi \succeq 0$, so any source of indefiniteness must come from $\nabla^2_{pp}(L - \phi)$, which is block tridiagonal:
\begin{subequations}
\begin{sizeddisplay}{\small}{\ignorespaces}
\begin{align}
&\nabla^2_{pp}(L - \phi) = \nabla^2_{pp} (v^T [f(p_i)]^K_{i=1} + z^T ([g_{\mathcal{I}}(p_i)]^K_{i=1} + s)) \nonumber \\
&\quad + \nabla^2_{pp} (y^T c_{\mathcal{E}}), \\
&\nabla^2_{pp} (v^T [f(p_i)]^K_{i=1} + z^T ([g_{\mathcal{I}}(p_i)]^K_{i=1} + s)) = \nonumber \\
&+ \left[\begin{matrix}
\nabla^2_{p_1 p_1}(v^T_1 f(p_1) + z^T_1 g_{\mathcal{I}}(p_1)) & & \\
 & \ddots & \\
 & & \nabla^2_{p_K p_K}(v^T_K f(p_K) + z^T_K g_{\mathcal{I}}(p_K))
\end{matrix}\right].
\end{align}
\end{sizeddisplay}
\end{subequations}
We want to compute a perturbation matrix $S$, hopefully small, such that $\nabla^2_{pp}L + S \succ 0$.  To this end we consider a block diagonal matrix $S = \Ssub{\mathcal{E}} + \Ssub{\mathcal{I}} + \delta_S I$ for a small $\delta_S > 0$ such that $\nabla^2_{pp} (y^T c_{\mathcal{E}}) + \Ssub{\mathcal{E}} \succeq 0$ and $\nabla^2_{pp} (v^T [f(p_i)]^K_{i=1} + z^T ([g_{\mathcal{I}}(p_i)]^K_{i=1} + s)) + \Ssub{\mathcal{I}} \succeq 0$. First we note that
\begin{equation}
    \nabla^2_{p_i p_j} (y^T c_{\mathcal{E}}) = \begin{bmatrix}
        \nabla^2_{u_i u_j} (y^T c_{\mathcal{E}}) & 0 \\
        0 & 0
    \end{bmatrix},
\end{equation}
for any $i,j=1,\ldots,K$. Thus we can take $\Ssub{\mathcal{E}}$ to be
\begin{equation}\label{eq:S_E}
\Ssub{\mathcal{E}} = l_{\mathcal{E}} \left[ {\begin{matrix}
   I' & 0 & \cdots & 0 \\
   0 & \ddots & \ddots & \vdots \\
   \vdots & \ddots & \ddots & 0 \\
   0 & \cdots & 0 & I' \\
\end{matrix}} \right] \in \mathbb{R}^{2(g+n)K \times 2(g+n)K}
\end{equation}
where $I'$ is the matrix defined in \eqref{eq:Iprime} and $l_{\mathcal{E}} \geq 0$ is an upper bound for the magnitude of the largest negative eigenvalue of $\nabla^2_{pp} (y^T c_{\mathcal{E}})$, i.e., $-l_{\mathcal{E}}$ is a lower bound of the minimum eigenvalue of $\nabla^2_{pp} (y^T c_{\mathcal{E}})$. Next we show that an appropriate $l_\mathcal{E}$ can be obtained with little computational effort.

\begin{theorem}
Let the entries of $\nabla^2_{pp} (y^T c_{\mathcal{E}})$ be given by \eqref{eq:hessian_ytce}, and choose 
\begin{align}
l_{\mathcal{E}} &= -4 \left(1 + \cos \left(\frac{\pi}{K+1}\right) \right) \cdot \min\Big\{0, \nonumber \\
&\hspace*{7em} \min_{j=1,\ldots,K+1}\left[w_j ((y)_j - (y)_{j-1})\right] \Big\}, \label{eq:lE}
\end{align}
then
\begin{equation}
\lambda_{\min}\left(\nabla^2_{pp} (y^T c_{\mathcal{E}})\right) \geq -l_{\mathcal{E}}.
\end{equation}
\end{theorem}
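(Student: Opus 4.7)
The plan is to exploit the fact that, by \eqref{eq:hessian_ytce}, every block of $\nabla^2_{pp}(y^T c_{\mathcal{E}})$ is a scalar multiple of the $2g \times 2g$ identity. Abbreviating $\alpha_j := w_j((y)_j - (y)_{j-1})$ for $j = 1, \ldots, K+1$ (using the boundary conventions $w_0 = (y)_0 = (y)_{K+1} = 0$), I would first factor $\nabla^2_{pp}(y^T c_{\mathcal{E}}) = 2A \otimes I$, where $A \in \mathbb{R}^{K \times K}$ is the symmetric tridiagonal matrix with $A_{jj} = \alpha_j + \alpha_{j+1}$ and $A_{j,j+1} = A_{j+1,j} = -\alpha_{j+1}$. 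Since the eigenvalues of a Kronecker product are the pairwise products of those of its factors, $\lambda_{\min}(\nabla^2_{pp}(y^T c_{\mathcal{E}})) = 2\,\lambda_{\min}(A)$, and it suffices to lower bound $\lambda_{\min}(A)$.

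The key structural step is to write $A$ as a weighted discrete Laplacian with signed weights. Defining the forward-difference matrix $B \in \mathbb{R}^{(K+1) \times K}$ by $B_{j,j} = 1$ and $B_{j+1,j} = -1$ for $j = 1, \ldots, K$ (and zero otherwise), and letting $D = \mathrm{diag}(\alpha_1, \ldots, \alpha_{K+1})$, a direct entry-by-entry check gives $A = B^T D B$. I would then split $D = D_+ + D_-$ into its non-negative and non-positive diagonal parts. Because $B^T D_+ B \succeq 0$, Weyl's inequality yields $\lambda_{\min}(A) \geq \lambda_{\min}(B^T D_- B)$. Writing $c := \min\{0, \min_j \alpha_j\} \leq 0$, the bound $D_- \succeq c\, I$ gives $B^T D_- B \succeq c\, B^T B$, and since $c \leq 0$ and $B^T B \succeq 0$, another application of Weyl yields $\lambda_{\min}(B^T D_- B) \geq c\, \lambda_{\max}(B^T B)$.

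To conclude, I would identify $B^T B$ as the classical matrix $\mathrm{tridiag}(-1, 2, -1) \in \mathbb{R}^{K \times K}$, whose spectrum $\{2 - 2\cos(k\pi/(K+1)) : k = 1, \ldots, K\}$ is well known; in particular $\lambda_{\max}(B^T B) = 2(1 + \cos(\pi/(K+1)))$. Chaining the inequalities yields
\[
\lambda_{\min}\!\left(\nabla^2_{pp}(y^T c_{\mathcal{E}})\right) = 2\,\lambda_{\min}(A) \geq 4 c \left(1 + \cos\!\left(\tfrac{\pi}{K+1}\right)\right) = -l_{\mathcal{E}},
\]
which is exactly the claimed bound.

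The main obstacle is spotting the Laplacian factorization $A = B^T D B$; once that is in place, the rest reduces to routine monotonicity of the positive semidefinite order together with the classical spectrum of the second-difference matrix. A naive Gershgorin-type estimate would give the correct form but with constant $8$ in place of $4(1 + \cos(\pi/(K+1))) \in (4, 8]$, so the incidence-matrix factorization is essential for obtaining the tight constant asserted in the statement.
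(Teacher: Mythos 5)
Your proposal is correct and follows essentially the same route as the paper's Appendix~E: both factor the Hessian through a difference (incidence) matrix as $B^T D B$, discard the positive-semidefinite part of the diagonal weight matrix, and bound the remainder by $\lambda_{\max}$ of the classical second-difference matrix $\mathrm{tridiag}(-1,2,-1)$, whose spectrum gives the constant $2(1+\cos(\pi/(K+1)))$. The only cosmetic differences are that the paper works with the full block matrix and shifts $D$ by $\lambda^-$ in one step rather than splitting $D$ into its nonnegative and nonpositive parts, while you first reduce to the $K\times K$ scalar matrix via the Kronecker structure.
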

\begin{proof}[Proof]
See Appendix D.
\end{proof}

Now we focus on $\Ssub{\mathcal{I}}$. First we note that
\begin{align}
    &\nabla^2_{p_i p_i}(v^T_1 f(p_1) + z^T_1 g_{\mathcal{I}}(p_1)) = \nonumber \\
    &\quad\quad \begin{bmatrix}
        \nabla^2_{u_i u_i} (z^T_1 g_{\mathcal{I}}(p_1)) & 0 \\
        0 & \nabla^2_{x_i x_i}(v^T_1 f(p_1)) + \nabla^2_{x_i x_i}(z^T_1 g_{\mathcal{I}}(p_1))
    \end{bmatrix},
\end{align}
for any $i,j=1,\ldots,K$ (it is clear from \eqref{eq:power_flow} that the second derivatives of $v^T_1 f(p_1)$ with respect to any two components of $u_i$ are zero). Let $I_i$ be the identity matrix for any $i \in \mathbb{N}$, then we can take $\Ssub{\mathcal{I}}$ as
\begin{equation}\label{eq:S_I}
    \Ssub{\mathcal{I}} = \left[ {\begin{matrix}
   \delta_{u1} I_{2g} & 0 & \cdots & \cdots & 0 \\
   0 & \delta_{x1} I_{2n} & \ddots & & \vdots \\
   \vdots & \ddots & \ddots & \ddots & \vdots \\
   \vdots & & \ddots & \delta_{uK} I_{2g} & 0 \\
   0 & \cdots & \cdots & 0 & \delta_{x1} I_{2n} \\
\end{matrix}} \right],
\end{equation}
where
\begin{subequations}
\begin{align}
\delta_{u1} &= \left\|\nabla^2_{u_i u_i}(z^T_i g_{\mathcal{I}}(p_i))\right\|_F, \\
\delta_{xi} &= \left\|\nabla^2_{x_i x_i}(v^T_i f(p_i)) + \nabla^2_{x_i x_i}(z^T_i g_{\mathcal{I}}(p_i))\right\|_F,
\end{align}
\end{subequations}
for $i = 1, \ldots, K$. Notice that the Frobenius norm is never smaller than the induced 2-norm of a matrix (see exercise 5.6.P23 of \cite{linear_algebra}), so we have that
\begin{subequations}
\begin{align}
&\nabla^2_{p_i p_i}(z^T_i g_{\mathcal{I}}(p_i)) + \delta_{ui} I_{2g} \succeq 0, \\
&\nabla^2_{x_i x_i}(v^T_i f(p_i)) + \nabla^2_{x_i x_i}(z^T_i g_{\mathcal{I}}(p_i)) + \delta_{xi} I_{2n} \succeq 0,
\end{align}
\end{subequations}
for $i=1,\ldots,K$, and thus $\nabla^2_{pp} (v^T [f(p_i)]^K_{i=1} + z^T ([g_{\mathcal{I}}(p_i)]^K_{i=1} + s)) + \Ssub{\mathcal{I}} \succeq 0$. In conclusion, our choices of $\Ssub{\mathcal{E}}$ and $\Ssub{\mathcal{I}}$ guarantee that $\nabla^2_{pp}L + \Ssub{\mathcal{E}} + \Ssub{\mathcal{I}} \succeq 0$, so $\nabla^2_{pp}L + S \succeq \delta_S I \succ 0$, as desired. For the value of $\delta_S$ we choose a small fraction of the 2-norm pf $w_k$, normalized by $K+1$. For example:
\begin{equation}\label{eq:delta_S}
    \delta_S = 10^{-4} \cdot \frac{\|w_k\|_2}{K + 1}.
\end{equation}

\par\indent
\subsubsection{Positivity of Dual Variables and Line Search}\par\indent

The KKT conditions of the interior point formulation require the entries of vectors $z$ and $s$ to have the same sign (see \eqref{eq:KKT_sz}). We require that $s > 0$, as otherwise the barrier terms are undefined. Therefore, we also require that $z > 0$. The Newton step may yield an update $\Delta z$ that makes some of the entries of $z + \Delta z$ non-positive. To prevent this situation, we scale $\Delta z$ using the fraction-to-boundary rule (see \cite{nocedal}) to ensure that the updated vector remains in the positive orthant. The same argument applies for $s$, so a scaling is also computed for $\Delta s$. Another source of difficulties for the Newton iteration is the high nonlinearity that may arise from the interior point formulation. In such cases, the Newton linearization is not a good approximation, except for small step lengths. Thus, if the Newton step does not yield a good enough decrease of the objective function, we should take a smaller step. We achieve this by using a backtracking line search over the Armijo condition of an appropriate merit function \cite{nocedal}. If the current iterate is feasible, then the line search allows us to guarantee that the next iterate is also feasible. In such a case, we can directly update $s$ to be the negative constraint violations a the next iterate. The scaling of $\Delta s$ can still be used for scaling $\Delta p$; this reduces the line search computation time in practice. To summarize, the new iterate variables are computed as follows:
\begin{subequations}\label{eq:line_search_update}
\begin{align}
\alpha^{\max}_z &= \max \left\{\alpha \in [0,1] \,:\, z + \alpha \Delta z \geq (1-\tau) z \right\}, \label{eq:max_step_1_z} \\
\alpha^{\max}_s &= \max \left\{\alpha \in [0,1] \,:\, s + \alpha \Delta s \geq (1-\tau) s \right\}, \label{eq:max_step_1_s} \\
\alpha^{\max}_z &= \min \left\{1, \min \left\{-\tau (z)_i / (\Delta z)_i : (\Delta z)_i < 0 \right\}\right\}, \label{eq:max_step_2_z} \\
\alpha^{\max}_s &= \min \left\{1, \min \left\{-\tau (s)_i / (\Delta s)_i : (\Delta s)_i < 0 \right\}\right\}, \label{eq:max_step_2_s} \\
p^+ &= p + \gamma^M \alpha^{\max}_s \Delta p, \\
y^+ &= y + \gamma^M \alpha^{\max}_z \Delta y, \\
v^+ &= v + \gamma^M \alpha^{\max}_z \Delta v, \\
z^+ &= z + \gamma^M \alpha^{\max}_z \Delta z, \\
s^+ &= - c_{\mathcal{I}}(p^+),
\end{align}
\end{subequations}
for parameters $\tau,\gamma \in (0,1)$. We remark that \eqref{eq:max_step_1_z} and \eqref{eq:max_step_1_s} correspond to the fraction-to-boundary rule (see \cite{nocedal}), and \eqref{eq:max_step_2_z} and \eqref{eq:max_step_2_s} are equivalent definitions that are easier to implement computationally. $M$ is the smallest non-negative integer satisfying the line search conditions. More formally, define the merit function as 
\begin{subequations}
\begin{align}
\psi(\nu, p) &= \psi_o(p) + \nu \psi_c(p), \text{~~where~~}\\
\psi_o(p) &= \phi(p) - \mu \sum_{i=1}^{K}\sum_{j \in \mathcal{I}}{\ln[\max \{-g_j(p_i), 0\}]}, \\
\psi_c(p) &= \left\|c_{\mathcal{E}}(p)\right\|_1 + \left\|f(p)\right\|_1,
\end{align}
\end{subequations}
for some parameter $\nu > 0$, where we use the convention that $\ln 0 = -\infty$. Then the line search conditions are
\begin{subequations}
\label{eq:line_search_conds}
\begin{align}
&\psi(\nu, p + \gamma^M \alpha^{\max}_s \Delta p) \leq \psi(\nu, p) + \eta \Big[(\nabla_p \psi_o(p))^T \gamma^M \alpha^{\max}_s \Delta p \nonumber \\
&\qquad + \nu \left(\psi_c(p + \gamma^M \alpha^{\max}_s \Delta p) - \psi_c(p)\right) \Big], \label{eq:cond_armijo} \\
&\frac{\min_{j=1,\ldots,K+1} \|b_j(p + \gamma^M \alpha^{\max}_s \Delta p)\|_{\infty}}{\max_{j=1,\ldots,K+1} \|b_j(p + \gamma^M \alpha^{\max}_s \Delta p)\|_{\infty}} > \epsilon_{\rm comp}, \label{eq:cond_thm1_1} \\
&\frac{(K+1)^{-1}\left\|\sum^{K+1}_{j=1} q_j(p + \gamma^M \alpha^{\max}_s \Delta p)\right\|_{\infty}}{\max_{j=1,\ldots,K+1} \|q_j(p + \gamma^M \alpha^{\max}_s \Delta p)\|_{\infty}} > \epsilon_{\rm comp}, \label{eq:cond_thm1_2} 
\end{align}
\end{subequations}
for some parameters $\eta, \epsilon_{\rm comp} \in (0,1)$. Equation \eqref{eq:cond_armijo} is the Armijo condition \cite{nocedal}. Equations~\eqref{eq:cond_thm1_1} and \eqref{eq:cond_thm1_2} are the necessary conditions for Theorem \ref{thm:DE_rank}, evaluated at the candidate path $p + \gamma^M \Delta p$. For this paper, we chose $\tau=0.99$, $\gamma=0.5$ and $\eta=10^{-4}$. These values were adapted from typical values used in solvers like IPOPT. 

The parameter $\nu$ deserves special attention, as it controls the trade-off between minimizing the objective function and satisfying the constraints. To ensure feasibility of the solution, $\nu$ must be chosen such that the linear term predicting the change of the constraint violations dominates the linear term predicting the change in the objective function value. As these linear terms change at every iteration, $\nu$ must be adjusted dynamically each time. We do this via a simplified version of the technique proposed in \cite{nocedal_hybrid}, which we describe next. We take $\nu = \nu_0$ for some small constant $\nu_0 > 0$ at the beginning of the interior point method. After computing the Newton step, but before performing the line search, we compute
\begin{equation}
    \nu_{\rm trial} = \frac{(\nabla_p \psi_o(p))^T \Delta p}{(1 - \kappa_{\nu}) \psi_c(p)},
\end{equation}
for some constant $\kappa_{\nu} \in (0,1)$. Then the updated value of $\nu$ to be used on the line search, $\nu^+$, is
\begin{equation}\label{eq:nu_update}
\nu^+ = \left\{ {\begin{array}{ll}
   \nu, & \nu \geq \nu_{\rm trial} \\
   \max\{\nu_{\rm trial}, 2\nu\}, & {\rm else} \\
\end{array}} \right. .
\end{equation}
For this paper we chose $\nu_0 = 10^{-6}$ and $\kappa_{\nu} = 0.1$.

After updating $\nu$, the line search is performed to compute the smallest $M \geq 0$ satisfying \eqref{eq:line_search_conds}. $M$ is computed by trial and error starting with $M=0$ and increasing its value by $1$ until all conditions are satisfied (in the extended real sense) or 
\begin{equation}
\label{eq:cond_sucess}
    \gamma^M \leq \epsilon_{\rm ls},
\end{equation}
for some parameter $\epsilon_{\rm ls} \in (0,1)$. For this paper, we chose $\epsilon_{\rm ls} = 10^{-2}$. It may happen that the Newton direction may not be productive at all, in which case $M \to \infty$. In such cases, the violation of \eqref{eq:cond_sucess} signals the need for a safer step direction, so the current step is discarded, the Hessian indefinitiness correction is applied and the step is recomputed. If \eqref{eq:cond_sucess} is violated again after the indefiniteness correction, then the algorithm reports failure, returns the current solution, and terminates.

\par\indent
\subsubsection{Stopping Criterion}\par\indent

The Newton iteration seeks primal and dual variables that solve the first-order KKT equations, but we are only interested in the values of the primal variables. In some situations, the Newton iteration may converge in the primal variables, but not the dual variables (when the constraint qualifications are ``almost'' not satisfied, for example). To avoid this problem, we follow the approach of IPOPT (see \cite{ipopt}) to define an error metric that is scaled with respect to the dual variables:
\begin{subequations}
\begin{align}
\rho_d &= \max \left\{\rho_{\max}, \frac{\|y\|_1 + \|z\|_1}{|\mathcal{E}| + K|\mathcal{I}|}\right\} / \rho_{\max}, \\
\rho_c &= \max \left\{\rho_{\max}, \frac{\|z\|_1}{K|\mathcal{I}|}\right\} / \rho_{\max}, \\
E_{\mu}(p,s,v,y,z) &= \max\Bigg\{\frac{\|\nabla_p L(p,s,v,y,z)\|_{\infty}}{\rho_d}, \frac{\|s \circ z - \mu \vec{1}\|_{\infty}}{\rho_c}, \nonumber \\
&\quad \|c_{\mathcal{E}}(p)\|_{\infty}, \|f(p)\|_{\infty}\Bigg\}, \label{eq:E_mu}
\end{align}
\end{subequations}
for some parameter $\rho_{\max} > 0$. For this work, we chose $\rho_{\max} = 100$. The stopping criterion for the Newton iteration is
\begin{equation}
E_{\mu}(p,s,v,y,z) \leq \epsilon_{\rm tol},
\end{equation}
where $\epsilon_{\rm tol}$ is the error tolerance specified by the user. This criterion provides the advantage of being robust against problems where the primal variables converge but the dual variables diverge (e.g., the solution does not satisfy the KKT conditions).

\par\indent
\subsubsection{Newton Iteration Algorithm}\par\indent

We have described all the necessary tools to implement a complete Newton iteration for the interior point method. This way we can solve the barrier problem for a fixed barrier parameter $\mu$, as long as we are provided an initial feasible path. Pseudo-code of the procedure given an initial feasible path $p$ is presented in Algorithm 2. The idea is to iteratively compute Newton steps until the error criterion is satisfied (success) or a maximum number of iterations (denoted as ${\rm iter}_{\max}$) is reached (failure). The starting values of the remaining variables can be provided to the algorithm to allow for a warm start, otherwise we choose by default $v=0$, $y = 0$, $s = -c_{\mathcal{I}}(p)$, and $z = \mu \vec{1} \oslash s$. For a small enough $\mu$, the barrier problem's solution will be a good enough approximation to the solution of the original problem. Algorithm 2 requires the user to specify the vectors of power flow equations $f$ and OPF constraints $g_{\mathcal{I}}$, which together completely characterize the power system and the OPF feasible region.

\newcommand{\ICflag}{\rm didCorrection}
\begin{algorithm}
 \caption{Solution of Barrier Problem (Inner Loop)}
 \begin{algorithmic}[1]
 \Procedure{BarrierSolve}{$f$, $g_{\mathcal{I}}$, $p$, $\mu$, $\epsilon_{\rm tol}$, ${\rm iter}_{\max}$, $\tau$, $\gamma$, $\hspace*{0.8em}\eta$, $\nu_0$, $\kappa_{\nu}$, $\epsilon_{\rm comp}$, $\epsilon_{\rm ls}$, $\rho_{\max}$, $v$, $y$, $z$, $s$}
 \LComment{$p$ must be feasible}
 \LComment{Default values of other vars:}
 \LComment{$v \gets 0$, $y \gets 0$, $s \gets -c_{\mathcal{I}}(p)$, $z \gets \mu \vec{1} \oslash s$}
  \State ${\rm iter} \gets 0$
  \While{${\rm iter} < {\rm iter}_{\max}$}
  \State $\ICflag \gets \texttt{False}$
  \State compute $\nabla_p L$ and $E_{\mu}(p,s,y,z)$
  \State \textbf{if} {$E_{\mu}(p,s,v,y,z) \leq \epsilon_{\rm tol}$} \textbf{then break} 
  \State compute $D_{\mathcal{E}}$, $D_{\mathcal{I}}$, $\Sigma$, $c_{\mathcal{I}}(p)$ from \eqref{eq:matrix_blocks} and $\nabla^2_{pp}L$
  \State $S \gets 0$
  \State compute $J'$ from \eqref{eq:newton_step_corrected} \label{alg:correction}
  \State compute $\Delta p$ $\Delta s$, $\Delta v$, $\Delta y$ and $\Delta z$ by solving \eqref{eq:newton_step_permuted} with a sparse routine
  \State compute $\nu^+$ from \eqref{eq:nu_update} and set $\nu \gets \nu^+$
  \State $\gamma^M \gets$ perform line search until \eqref{eq:line_search_conds} is satisfied or \eqref{eq:cond_sucess} is violated
  \If {$\gamma^M \leq \epsilon_{\rm ls}$ {\bf and} \ICflag}
     \State report ``failed after inertia correction''
     \State \textbf{break}
  \ElsIf {$\gamma^M \leq \epsilon_{\rm ls}$}  
    \State compute $\Ssub{\mathcal{E}}$ from \eqref{eq:S_E}, $\Ssub{\mathcal{I}}$ from \eqref{eq:S_I} and $\delta_S$ from \eqref{eq:delta_S}
    \State $S \gets \Ssub{\mathcal{E}} + \Ssub{\mathcal{I}} + \delta_S I$
    \State $\ICflag \gets \texttt{True}$
    \State \textbf{go to} \ref{alg:correction}:
  \EndIf
  \State compute $p^+$, $v^+$, $y^+$, $z^+$, $s^+$ from \eqref{eq:line_search_update}
  \State $(p, s, v, y, z, {\rm iter}) \gets (p^+, s^+, v^+, y^+, z^+, {\rm iter}+1)$
  \EndWhile
  \State \textbf{return} $p$, $s$, $v$, $y$, $z$
  %, $E_{\mu}(p,s,y,z)$
  \EndProcedure
 \end{algorithmic} 
\end{algorithm}

\subsection*{Appendix C: Proof of Theorem 1}
\begin{proof}[Proof]
For brevity, we omit the dependence of $b_j(p)$ and $q_j(p)$ on $p$. Notice that $D_{\mathcal{E}}$ can be written as
\begin{equation}
D_{\mathcal{E}} = \left[\begin{matrix}
b^T_1 + b^T_2 & -b^T_2 &  &   \\
-b^T_2 & \ddots & \ddots &   \\
 &  \ddots & \ddots & -b^T_K \\
 &   & -b^T_K & b^T_K + b^T_{K+1}
\end{matrix}\right].
\end{equation}
Define, for any $k \in \mathbb{N}$, the following matrices:
\begin{subequations}
\begin{align}
L_k &= \left[ {\begin{matrix}
   1 & & &  \\   
   1 & 1 & &  \\
   \vdots & \ddots & \ddots &  \\
   1 & \cdots & 1 & 1 \\
\end{matrix}} \right] \in \mathbb{R}^{k \times k}, \\
U_k &= \left[ {\begin{matrix}
   1 & -1 & &  \\   
     & 1 & \ddots &  \\
     & & \ddots & -1 \\
     & & & 1 \\
\end{matrix}} \right] \in \mathbb{R}^{k \times k}, 
\end{align}
\end{subequations}
and, for $k = 2, \ldots, K$, define
\begin{subequations}
\begin{align}
D_k &= {\rm diag}\left([q^T_j]_{j \in \{1,\ldots,K+1\} \setminus \{k\}}\right)^T \in \mathbb{R}^{2(g+n)K \times K}, \\
M_k &= \left[ {\begin{matrix}
   L_{k-1} \otimes I_{2g} & 0 \\
   0 & L^T_{K-k+1} \otimes I_{2g}
\end{matrix}} \right] \in \mathbb{R}^{2gK \times 2gK}, 
\end{align}
\end{subequations}
where $\otimes$ denotes the Kronecker product and $I_k$ denotes the $k \times k$ identity matrix. Lastly, define
\begin{equation}
Q = \left([q^T_j]^{K+1}_{j=1}\right)^T,
\end{equation}
and let $e_k \in \mathbb{R}^k$ be the last (rightmost) column of the $k \times k$ identity matrix. Then, for any $k=2,\ldots,K$, we have by direct computation that

{\setlength{\parskip}{-1em}
\small
\begin{align}
&(D_p c_{\mathcal{E}}) \cdot (M_k D_k) = \nonumber \\
 &\left[ {\begin{matrix}
   U_{k-1} + e_{k-1}(b^T_k (Q)_{1:k-1,:}) & -e_{k-1}(b^T_k (Q)_{k+1:K+1,:}) \\
   -e_{k-1}(b^T_k (Q)_{1:k-1,:}) & U^T_{K-k+1} + e_{k-1}(b^T_k (Q)_{k+1:K+1,:})
\end{matrix}} \right].
\end{align}
}%
We next eliminate the off-diagonal entries of $U_{k-1}$ and $U^T_{K-k+1}$ using elementary column operations. Thus, there exists an invertible matrix $C \in \mathbb{R}^{K \times K}$ such that
\begin{align}
&(D_p c_{\mathcal{E}}) \cdot (M_k D_k) C = \nonumber \\
 &\left[ {\begin{matrix}
 I_{k-2} & & & \\
 \boxtimes & 1+\sum^{k-1}_{j=1}{b^T_k q_j} & -\sum^{K+1}_{j=k+1}{b^T_k q_j} & \boxtimes \\
 \boxtimes & -\sum^{k-1}_{j=1}{b^T_k q_j} & 1+\sum^{K+1}_{j=k+1}{b^T_k q_j} & \boxtimes \\
  & & & I_{K-k}
\end{matrix}} \right],
\end{align}
where the symbol $\boxtimes$ denotes blocks with possibly non-zero, but unimportant entries. We can eliminate the $\boxtimes$ entries using elementary row operations, so there exists an invertible matrix $R \in \mathbb{R}^{K \times K}$ such that
\begin{align}
&A_k = R(D_p c_{\mathcal{E}}) \cdot (M_k D_k) C = \nonumber \\
 &\quad \left[ {\begin{matrix}
 I_{k-2} & & & \\
  & 1+\sum^{k-1}_{j=1}{b^T_k q_j} & -\sum^{K+1}_{j=k+1}{b^T_k q_j} &  \\
  & -\sum^{k-1}_{j=1}{b^T_k q_j} & 1+\sum^{K+1}_{j=k+1}{b^T_k q_j} &  \\
  & & & I_{K-k}
\end{matrix}} \right],
\end{align}
where we named the final matrix as $A_k$ for convenience. The determinant of $A_k$ can be readily compued as
\begin{subequations}
\begin{align}
\det(A_k) &= \left(1+{\textstyle\sum}^{k-1}_{j=1}{b^T_k q_j}\right) \left(1+{\textstyle\sum}^{K+1}_{j=k+1}{b^T_k q_j}\right) \nonumber \\
&\quad - \left({\textstyle\sum}^{k-1}_{j=1}{b^T_k q_j}\right) \left({\textstyle\sum}^{K+1}_{j=k+1}{b^T_k q_j}\right), \\
\det(A_k) &= 1 + b^T_k \left({\textstyle\sum}^{k-1}_{j=1}{ q_j} +{\textstyle\sum}^{K+1}_{j=k+1}{q_j}\right).
\end{align}
\end{subequations}
Notice that $b^T_k q_k = 1$, and hence
\begin{equation}
\label{eq:det_Ak}
\det(A_k) = b^T_k {\textstyle\sum}^{K+1}_{j=1}{q_j} = b^T_k Q {\vec 1},
\end{equation}
where ${\vec 1}$ denotes a vector with all entries equal to $1$. The previous argument (with small modifications) still holds for $k=1$ and $k=K+1$, so \eqref{eq:det_Ak} holds for $k=1,\ldots,K+1$. Assume for contradiction that $D_{\mathcal{E}}$ is rank deficient, then from properties of the rank (see \cite{linear_algebra}), we have that
\begin{equation}
{\rm rank}(A_k) \leq {\rm rank}(D_{\mathcal{E}}) < K,
\end{equation}
so $A_k$ is singular and therefore
\begin{equation}
q^T_k Q {\vec 1} = (b^T_k b_k)^{-1} b^T_k Q {\vec 1} = (b^T_k b_k)^{-1} \det(A_k) = 0,
\end{equation}
for $k=1,\ldots,K+1$. In matrix-vector form we have that
\begin{equation}
Q^T Q {\vec 1} = 0,
\end{equation}
so
\begin{equation}
\left({\textstyle\sum}^{K+1}_{j=1} q_j\right)^2 = \left(Q {\vec 1}\right)^T \left(Q {\vec 1}\right) = {\vec 1}^T \left(Q^T Q {\vec 1}\right) = 0,
\end{equation}
which implies that ${\textstyle\sum}^{K+1}_{j=1} q_j = 0$, and so we have a contradiction.
\end{proof}

\subsection*{Appendix D: Proof of Theorem 2}
\begin{proof}[Proof]
For brevity, we define the scalars $a_j$ as
\begin{equation}
a_j = \left\{ {\begin{array}{ll}
   0, & j=0 \\
   2 w_j ((y)_j - (y)_{j-1}) & j \in \{1,\ldots, K+1\}
\end{array}} \right. ,
\end{equation}
and $\lambda^-$ as
\begin{equation}
\lambda^- = \min_{k=0,\ldots,K+1} a_k \leq 0,
\end{equation}
so $l_{\mathcal{E}}$ becomes
\begin{equation}
l_{\mathcal{E}} = -2 \lambda^- \left(1 + \cos \left(\frac{\pi}{K+1}\right) \right) ,
\end{equation}
and we can write $\nabla^2_{pp} (y^T c_{\mathcal{E}})$ as
\begin{equation}
\nabla^2_{pp} (y^T c_{\mathcal{E}}) = \left[\begin{matrix}
(a_1 + a_2)I' & -a_2 I' &  &   \\
-a_2 I' & \ddots & \ddots &   \\
 &  \ddots & \ddots & -a_K I' \\
 &   & -a_K I' & (a_K + a_{K+1}) I'
\end{matrix}\right],
\end{equation}
where $I'$ is the matrix defined in \eqref{eq:Iprime}. Let $I''$ be the $2(g+n) \times 2(g+n)$ identity matrix, and let us define the following matrices: 
\begin{subequations}
\begin{align}
D &= \left[ {\begin{matrix}
   a_1 I' & 0 & \cdots & 0 \\
   0 & \ddots & \ddots & \vdots \\
   \vdots & \ddots & \ddots & 0 \\
   0 & \cdots & 0 & a_{K+1} I' \\
\end{matrix}} \right] \in \mathbb{R}^{2(g+n)(K+1) \times 2(g+n)(K+1)}, \\
A &= \left[ {\begin{matrix}
   I'' & 0 & \cdots &  & 0 \\   
   -I'' & I'' & 0 & \cdots & 0 \\
   0 & \ddots & \ddots & \ddots & \vdots \\
   \vdots & \ddots & \ddots & \ddots & 0 \\
   0 & \cdots & 0 & -I'' & I'' \\
   0 & \cdots &  & 0 & -I'' \\
\end{matrix}} \right] \in \mathbb{R}^{2(g+n)(K+1) \times 2(g+n)K}.
\end{align}
\end{subequations}
We can then factor $\nabla^2_{pp} (y^T c_{\mathcal{E}})$ as
\begin{equation}
\nabla^2_{pp} (y^T c_{\mathcal{E}}) = A^T D A.
\end{equation}
Let the following matrices be:
\begin{subequations}
\begin{align}
D_1 &= \left[ {\begin{matrix}
    (a_1 - \lambda^-) I' & 0 & \cdots & 0 \\
   0 & \ddots & \ddots & \vdots \\
   \vdots & \ddots & \ddots & 0 \\
   0 & \cdots & 0 & (a_{K+1} - \lambda^-) I' \\
\end{matrix}} \right], \\
D_2 &= \left[ {\begin{matrix}
    I' & 0 & \cdots & 0 \\
   0 & \ddots & \ddots & \vdots \\
   \vdots & \ddots & \ddots & 0 \\
   0 & \cdots & 0 & I' \\
\end{matrix}} \right] \in \mathbb{R}^{2(g+n)(K+1) \times 2(g+n)(K+1)},
\end{align}
\end{subequations}
then clearly
\begin{equation}
\nabla^2_{pp} (y^T c_{\mathcal{E}}) = \lambda^- A^T D_2 A + A^T D_1 A.
\end{equation}
The matrix $D_1$ is block diagonal with positive semidefinite blocks, so $D_1 \succeq 0$. This means that $A^T D_1 A \succeq 0$ and thus, from the concavity of the smallest eigenvalue (see \cite{matrix_calculus}), we have that
\begin{align}\label{eq:lmin_ytce}
\lambda_{\min}\left({\nabla^2_{pp} (y^T c_{\mathcal{E}})}\right) &\geq \lambda_{\min}\left({\lambda^- A^T D_2 A}\right) + \lambda_{\min}\left({A^T D_1 A}\right) \nonumber \\
&\geq \lambda_{\min}\left({\lambda^- A^T D_2 A}\right).
\end{align}
Denote the Kronecker product by $\otimes$, then
\begin{equation}
A^T D_2 A = \left[ {\begin{matrix}
   2I' & -I' & 0 & \cdots & 0 \\
   -I' & 2I' & \ddots & \ddots & \vdots \\
   0 & \ddots & \ddots & \ddots & 0 \\
   \vdots & \ddots & \ddots & 2I' & -I' \\
   0 & \cdots & 0 & -I' & 2I' \\
\end{matrix}} \right] = Y \otimes I',
\end{equation}
where
\begin{equation}
Y = \left[ {\begin{matrix}
   2 & -1 & 0 & \cdots & 0 \\
   -1 & 2 & \ddots & \ddots & \vdots \\
   0 & \ddots & \ddots & \ddots & 0 \\
   \vdots & \ddots & \ddots & 2 & -1 \\
   0 & \cdots & 0 & -1 & 2 \\
\end{matrix}} \right] \in \mathbb{R}^{K \times K}.
\end{equation}
Denote the eigenvalues of $Y$ by $\xi_k$ for $k=1,\ldots,K$, then (see \cite{kulkarni})
\begin{equation}
\xi_k = 2 - 2 \cos\left(\frac{k \pi}{K + 1}\right).
\end{equation}
The eigenvalues of the Kronecker product correspond to the product of the eigenvalues of each matrix, so the eigenvalues of $Y \otimes I$ are $\xi_k$ for $k=1\ldots,K$, each repeated $2g$ times, and $0$ repeated $2nK$ times \cite{eigenvalues}. As $\lambda^- \leq 0$ we can write \eqref{eq:lmin_ytce} as
\begin{subequations}
\begin{align}
\lambda_{\min}\left({\nabla^2_{pp} (y^T c_{\mathcal{E}})}\right) &\geq \lambda^- \cdot \lambda_{\max}\left({A^T D_2 A}\right), \\
\lambda_{\min}\left({\nabla^2_{pp} (y^T c_{\mathcal{E}})}\right) &\geq \lambda^- \cdot \lambda_{\max}\left({Y \otimes I'}\right), \\
\lambda_{\min}\left({\nabla^2_{pp} (y^T c_{\mathcal{E}})}\right) &\geq \lambda^- \cdot \max \left\{ \lambda_{\max}\left({Y}\right), 0 \right\}, \\
\lambda_{\min}\left({\nabla^2_{pp} (y^T c_{\mathcal{E}})}\right) &\geq \lambda^-  \left(2 - 2 \cos\left(\frac{K \pi}{K + 1}\right)\right), \\
\lambda_{\min}\left({\nabla^2_{pp} (y^T c_{\mathcal{E}})}\right) &\geq 2 \lambda^- \left(1 + \cos\left(\frac{ \pi}{K + 1}\right)\right) = -l_{\mathcal{E}},
\end{align}
\end{subequations}
and the claim follows.
\end{proof}
\fi
%----------------End of conditional section------------------------

% if have a single appendix:
%\appendix[Proof of the Zonklar Equations]
% or
%\appendix  % for no appendix heading
% do not use \section anymore after \appendix, only \section*
% is possibly needed

% use appendices with more than one appendix
% then use \section to start each appendix
% you must declare a \section before using any
% \subsection or using \label (\appendices by itself
% starts a section numbered zero.)
%

% Can use something like this to put references on a page
% by themselves when using endfloat and the captionsoff option.
\ifCLASSOPTIONcaptionsoff
  \newpage
\fi

\end{document}